\newcounter{alph}
\newtheorem{theo}[alph]{Theorem}
\numberwithin{equation}{section}
\newtheorem{cor}[equation]{Corollary}
\newtheorem{lem}[equation]{Lemma}
\newtheorem{prop}[equation]{Proposition}
\newtheorem{thm}[equation]{Theorem}
\theoremstyle{definition}
\newtheorem{asu}[equation]{Assumption}
\def\C{\mathbb C}
\def\F{\mathbb F}
\def\R{\mathbb R}
\def\T{\mathcal T}
\def\ve{\varepsilon}
\def\vf{\varphi}
\def\la{\langle}
\def\ra{\rangle}
\newcommand{\area}{\operatorname{area}}
\newcommand{\dt}{\operatorname{dt}}
\newcommand{\dx}{\operatorname{dx}}
\newcommand{\ess}{\operatorname{ess}}
\newcommand{\inj}{\operatorname{inj}}
\newcommand{\Lip}{\operatorname{Lip}}
\newcommand{\rk}{\operatorname{rk}}
\newcommand{\Ray}{\operatorname{Ray}}
\newcommand{\SO}{\operatorname{SO}}
\newcommand{\supp}{\operatorname{supp}}
\newcommand{\vol}{\operatorname{vol}}
\begin{document}


\title[Small eigenvalues]
{Small eigenvalues of Schr\"odinger operators over geometrically finite manifolds}
\author{Werner Ballmann}
\address
{WB: Max Planck Institute for Mathematics,
Vivatsgasse 7, 53111 Bonn}
\email{hwbllmnn\@@mpim-bonn.mpg.de}
\author{Panagiotis Polymerakis}
\address{PP: Max Planck Institute for Mathematics,
Vivatsgasse 7, 53111 Bonn}
\email{polymerp\@@mpim-bonn.mpg.de}

\thanks{\emph{Acknowledgments.}
We are grateful to the Max Planck Institute for Mathematics
and the Hausdorff Center for Mathematics in Bonn for their support and hospitality.}

\date{\today}

\subjclass[2010]{58J50, 35P15, 53C20}
\keywords{Geometrically finite, Laplace operator, Schr\"odinger operator, small eigenvalues}

\begin{abstract}
We estimate the number of small eigenvalues of Schr\"odinger operators
on Riemannian vector bundles over geometrically finite manifolds.
\end{abstract}

\maketitle

\tableofcontents

\section*{Introduction}
\label{secint}

Estimating the number of small eigenvalues of complete Riemannian manifolds
is a traditional subject of study in differential geometry.
The origins lie in the theory of hyperbolic surfaces,
where eigenvalues of the Laplacian on functions are called small, if they are below $1/4$,
the bottom of the spectrum of the hyperbolic plane.
Similarly, an eigenvalue of the Laplacian on functions on real hyperbolic manifolds
is called small if it is below $(m-1)^2/4$, where $m$ denotes the dimension of the manifold.
Small eigenvalues of hyperbolic manifolds are connected to other geometric invariants,
for example the length spectrum;
see \cite{Buser92} for more information.
 
Let $M$ be a geometrically finite manifold of dimension $m$
and sectional curvature bounded by $-1\le K \le-a^2 < 0$, where $0<a<1$.
Recall that one of the four equivalent definitions of geometrically finite of Bowditch requires
that the volume of the set $U_r(C)$ of points of distance less than $r$
to the convex core $C$ of $M$ is finite for some or any $r>0$ \cite[Section 5]{Bowditch95}.
By \cite[Corollary 1.4]{BelegradekKapovitch06},
$M$ is diffeomorphic to the interior of a compact manifold with boundary (possibly empty).
Conversely, complete and connected Riemannian surfaces of pinched negative curvature
are geometrically finite if they are diffeomorphic to the interior of a compact surface
with boundary (possibly empty).
Complete and connected Riemannian manifolds of finite volume
and pinched negative sectional curvature are geometrically finite
and among the geometrically finite manifolds characterized by the property that $C=M$ or,
equivalently, that $U_r(C)=M$.

Let $H$ be the universal covering space of $M$, endowed with the lifted metric,
and denote by $\lambda_0=\lambda_0(H)$ the bottom of the spectrum of the Laplacian on functions on $H$.
We have the sharp bounds
\begin{align}\label{lambda0}
	\frac{(m-1)^2a^2}{4} \le \lambda_0 \le \frac{(m-1)^2}{4}.
\end{align}
The left inequality is due to McKean \cite{McKean70}.
The right inequality is an immediate consequence of Cheng's \cite[Theorem 1.1]{Cheng75} (as observed in \cite[Theorem 3.1]{Donnelly81b}).
In this article,
we say that an eigenvalue $\lambda$ of (the Laplacian $\Delta_0$ on functions on) $M$ is \emph{small}
if $\lambda\le\lambda_0$.
We are interested in estimating the number of small eigenvalues of $M$,
where we count eigenvalues always with multiplicity.

For any $\lambda\ge0$, denote by $N(\lambda)$ the dimension of the image
of the spectral projection of $\Delta_0$ with respect to $[0,\lambda]$.
Then $N(\lambda)$ is equal to the finite number of eigenvalues of $M$ in $[0,\lambda]$
for $\lambda<\lambda_{\ess}(M)$ and $N(\lambda)=\infty$ for $\lambda>\lambda_{\ess}(M)$,
where $\lambda_{\ess}(M)$ denotes the bottom of the essential spectrum of $\Delta_0$.

\begin{theo}\label{main}
For any $0<\ve\le\lambda_0$,
\begin{align*}
	\frac{N(\lambda_0-\ve)}{\vol(U_1(C))} \le C(m,a,\ve) < \infty.
\end{align*}
In particular, $\lambda_{\ess}(M)\ge\lambda_0$.
\end{theo}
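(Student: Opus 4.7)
The plan is to combine three ingredients: (i) a concentration estimate forcing eigenfunctions for eigenvalues below $\lambda_0-\varepsilon$ to have a fixed fraction of their $L^2$-mass in a bounded neighborhood $U_R(C)$ of the convex core; (ii) an on-diagonal heat kernel trace bound on that neighborhood; and (iii) a volume comparison between $U_R(C)$ and $U_1(C)$.

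For (i), I would exploit the structure of geometrically finite manifolds of pinched negative curvature: for $R$ large, the complement $M\setminus U_R(C)$ decomposes into a controlled family of ends (cusp neighborhoods together with tube-like regions along $\partial C$) on which the Dirichlet bottom of spectrum exceeds $\lambda_0-\varepsilon/2$. This is essentially because these ends locally resemble horoballs or convex subsets of the universal cover $H$, whose bottom of spectrum is $\lambda_0$. A Persson-type cut-off argument applied to an eigenfunction $\varphi$ with eigenvalue $\lambda\le\lambda_0-\varepsilon$ then yields $\int_{U_R(C)}|\varphi|^2\ge\tfrac12\int_M|\varphi|^2$, with $R=R(m,a,\varepsilon)$.

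For (ii), the curvature bound $-1\le K\le -a^2$ gives a uniform on-diagonal heat kernel estimate $p_t(x,x)\le C_1(m,a,t)$ on $M$. If $\varphi_1,\dots,\varphi_N$ are $L^2$-orthonormal eigenfunctions of $\Delta_0$ with eigenvalues $\lambda_i\le\lambda_0-\varepsilon$, then $\sum_i e^{-t\lambda_i}|\varphi_i(x)|^2\le p_t(x,x)$, and integrating over $U_R(C)$ using step (i) gives
\begin{equation*}
\tfrac12\, N\, e^{-t(\lambda_0-\varepsilon)} \le C_1(m,a,t)\,\vol(U_R(C)).
\end{equation*}
For (iii), the convexity of $C$ makes the normal exponential map from $\partial C$ a diffeomorphism onto $M\setminus C$, and Riccati comparison in pinched negative curvature controls its Jacobian; integrating yields $\vol(U_R(C))\le C_3(m,a,R)\vol(U_1(C))$. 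Combining the three pieces produces the desired bound, and finiteness of $N(\lambda)$ for every $\lambda<\lambda_0$ then gives $\lambda_{\ess}(M)\ge\lambda_0$.

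The main obstacle is step (i): quantifying, uniformly across all admissible $M$, that the Dirichlet bottom of spectrum on each end approaches $\lambda_0(H)$ as the end is truncated farther from $C$. For cusp ends this follows from the warped-product form of the metric and direct comparison with a horoball in $H$, but for the tube-like regions of $M\setminus U_R(C)$ adjacent to $\partial C$ one must relate their Dirichlet spectrum to that of $H$ via a lifting or approximation argument, and arranging the estimate to depend only on $m$, $a$, $\varepsilon$ is the delicate technical point.
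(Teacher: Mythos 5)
Your three-step strategy (concentration, heat-trace bound, volume comparison) is sound in outline, and step (iii) is indeed close to the paper's Lemma on volume comparison. But steps (i) and (ii) contain a genuine gap that breaks the argument.

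The pointwise bound $p_t(x,x)\le C_1(m,a,t)$ in step (ii) is false on a geometrically finite manifold, and the trace inequality you derive from it is vacuous. The covering formula $p_t^M(x,x)=\sum_{\gamma\in\Gamma}p_t^H(\tilde x,\gamma\tilde x)$ shows that $p_t^M(x,x)$ grows without bound as $x$ moves into the thin part of $M$ (deep into a cusp neighborhood, or close to a short closed geodesic, where many group elements translate $\tilde x$ a small distance). In fact, for a model cusp one checks that $\int_{\text{cusp}}p_t^M(x,x)\,dx$ \emph{diverges} even though the cusp has finite volume, so $\int_{U_R(C)}p_t(x,x)\,dx$ cannot be bounded by $C_1(m,a,t)\vol(U_R(C))$. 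In the finite-volume case the problem is even starker: $U_R(C)=M$, step (i) is vacuous, and the heat trace over $M$ is infinite. This is not a technicality one can patch with a slightly larger constant; the on-diagonal bound you rely on requires a lower bound on the injectivity radius, which geometrically finite manifolds do not have uniformly.

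The underlying reason step (i) is set up on the wrong region is that $U_R(C)$ still contains the cusp neighborhoods and the tubes around short geodesics, precisely where the heat kernel is uncontrolled. What one must show is that eigenfunctions with eigenvalue below $\lambda_0-\ve$ concentrate on the \emph{thick-and-near} set $\{r\le s_\ve\}$, where $r$ is the distance to the $\rho_0$-thick part $K_0$ of the convex core; there the injectivity radius is bounded below by $e^{-s_\ve}\rho_0/2$ (Lemma~\ref{inrest}), which is exactly what your heat-kernel step needs. The concentration estimate on $\{r\le s_\ve\}$ is correspondingly harder than your sketch allows: one has to cut off the eigenfunction not only in the funnel direction outside $C$ but also along the cusp and short-geodesic directions, and for those pieces one cannot simply compare with a convex subset of $H$. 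The paper handles this via an IMS-type partition of unity together with an amenability result (Proposition~\ref{pp}): the cusp and tube regions have infinite amenable fundamental group, so Rayleigh quotients of sections supported there are at least $\lambda_0$ after lifting, and the Lipschitz constants of the partition are controlled by the strict convexity of $C$. Once the eigenspace is injected into $\Lip_c(\{r\le s_\ve\},E)$ with Rayleigh quotient below $\lambda_0-\ve$, either your heat-trace count (now applied to the Dirichlet problem on $\{r\le s_\ve\}$ with the injectivity-radius lower bound in hand) or the paper's Neumann-eigenvalue covering argument gives the desired linear-in-volume count. So the heat-kernel route can be made to work, but only after the concentration region is chosen correctly and the cusp/tube directions are dealt with; as written, your step (ii) collapses.
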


The main point of \cref{main} is of course that the constant $C(m,a,\ve)$
does not depend on the specific $H$ or its quotient $M$,
but only on $m$, $a$, and $\ve$, as indicated by the notation.
By what we said above, $U_1(C)$ is to be read as $M$
in the case where the volume of $M$ is finite.

A complete Riemannian metric on a surface $S$ of finite type
and negative Euler characteristic $\chi(S)$ has at most $-\chi(S)$ eigenvalues in $[0,\lambda_0]$,
see \cite[Th\'eor\`emes 1 and 2]{OtalRosas09} and, for the result in the stated generality,
\cite[Theorem 1.5]{BallmannMatthiesenMondal17}.
Together with the Gau\ss{}-Bonnet formula and the lower curvature bound $-1$,
it gives that $N(\lambda_0)\area(S)^{-1}\le1/2\pi$ in the case where $\area(S)<\infty$.
This is a sharp estimate.
In the general case of \cref{main}, sharp estimates are not known.

In the case where $M$ is compact or, more generally, of finite volume,
Buser, Colbois, and Dodziuk showed that the number of eigenvalues of $M$
in $[0,(m-1)a^2/4]$ is at most $C(m,a)\vol(M)$ \cite[Theorem 3.6]{BuserColboisDodziuk93}.
It would be interesting, in that case, to get their kind of bound,
but for the number of eigenvalues of $M$ in $[0,\lambda_0]$.
The main--and only--issue here is an extension of their estimate of the second Neumann eigenvalue
of domains which they call pieces of cheese \cite[(3.11)]{BuserColboisDodziuk93}
(see also \cite[pp.\,61--64]{Buser80}).

Hamenst\"adt considered the case of geometrically finite manifolds of infinite volume.
She obtained that $N((m-1)a^2/4-\ve)e^{-\vol(U_1(C))}$
is bounded by a constant $C(m,a,\ve)$ \cite[Theorem.2]{Hamenstaedt04}.
In the hyperbolic case, that is,
in the case where $M$ is a quotient of one of the hyperbolic spaces $H=H_\F^\ell$,
she obtained the same kind of bound,
but with $\lambda_0$ in place of $(m-1)a^2/4$ \cite[Corollary.2]{Hamenstaedt04}.
In contrast to her estimates, our estimate is linear in the volume of $U_1(C)$.

In her recent article \cite{Hamenstaedt19},
Hamenst\"adt considered small eigenvalues also for geometrically finite manifolds of finite volume,
but her estimates are of a somewhat different nature.
Namely, she obtains lower estimates in terms of Neumann eigenvalues of specific large domains in $M$.
Corollaries \ref{hamfin} and \ref{haminf} are versions of her result for both cases,
$M$ of finite or infinite volume,
but for Dirichlet eigenvalues of sufficiently large domains in $M$.

For geometrically finite hyperbolic manifolds $M$, Li shows that $N(\lambda_0)$ is finite, that is,
that the number of eigenvalues of $M$ in $[0,\lambda_0]$ is finite \cite[Theorem 1.1]{Li20}. 
(He only asserts this for the half-open interval $[0,\lambda_0)$.)
However, his arguments do not seem to give an estimate on the number of all small eigenvalues.

With some care, our arguments also work in the case of Schr\"odinger operators
on vector bundles $E$ over $M$, whose pull-back to $H$ is a bundle associated
to a geometric structure, with structure group $G$ a covering group of $\SO(m)$,
via an orthogonal representation $\theta$ of $G$ on a Euclidean space $E_0$.
Such bundles inherit a Riemannian metric and a metric connection $\nabla$.
Examples are twisted versions of tensor bundles, spinor bundles, flat bundles,
and bundles obtained from them by natural operations.

Let $E$ be as above and $A=\Delta+V$ be a formally self-adjoint Schr\"odinger operator on $E$,
where $\Delta=\nabla^*\nabla$ denotes the connection Laplacian.
The pull-backs of $E$ and $A$ to the universal covering space $H$ of $M$
will be denoted by $E_H$ and $A_H$.
Assume that the potential $V$ is bounded from below.
Then both, $A$ and $A_H$, are bounded from below and, in particular,
essentially self-adjoint \cite[Theorem A.24]{BallmannPolymerakis20a}.

Denote by $\lambda_0=\lambda_0(A_H)=\lambda_0(A,H)$ the bottom of the spectrum of $A_H$.
We say that an eigenvalue $\lambda$ of $A$ is \emph{small} if $\lambda\le\lambda_0$.
For any $\lambda\in\R$, denote by $N_A(\lambda)$ the dimension of the image
of the spectral projection of $A$ with respect to $(-\infty,\lambda]$.
Then $N_A(\lambda)$ is equal to the finite number of eigenvalues of $A$ in $(-\infty,\lambda]$
for $\lambda<\lambda_{\ess}(A)$ and $N_A(\lambda)=\infty$ for $\lambda>\lambda_{\ess}(A)$,
where $\lambda_{\ess}(A)$ denotes the bottom of the essential spectrum of $A$.

\begin{theo}\label{maina}
For any $\ve>0$,
\begin{align*}
	\frac{N_A(\lambda_0-\ve)}{\vol(U_1(C))} \le C(m,a,|\theta_*|,\ve)\rk(E).
\end{align*}
In particular, $\lambda_{\ess}(A)\ge\lambda_0$.
\end{theo}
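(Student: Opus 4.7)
The plan is to derive \cref{maina} by reducing to a scalar Schr\"odinger problem on $M$ and then applying the strategy of \cref{main}, while tracking the factors $\rk(E)$ and $|\theta_*|$. By the min--max principle, $N_A(\lambda_0-\ve)$ equals the supremum of $\dim W$ over finite-dimensional subspaces $W$ of smooth compactly supported sections of $E$ on which the $A$-Rayleigh quotient is at most $\lambda_0-\ve$; the goal is to bound this supremum by $C(m,a,|\theta_*|,\ve)\rk(E)\vol(U_1(C))$.

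The first ingredient is the refined Kato inequality $\bigl|\nabla|s|\bigr|\le|\nabla s|$, which gives for any smooth section $s$ of $E$
\begin{equation*}
\int_M\bigl|\nabla|s|\bigr|^2+V|s|^2\,d\vol\le\int_M\la As,s\ra\,d\vol.
\end{equation*}
Hence a section with small $A$-Rayleigh quotient produces a nonnegative scalar function with small Rayleigh quotient for $\Delta_0+V$, and in particular $\lambda_0(\Delta_0+V,H)\le\lambda_0$. Because $E$ is associated to the geometric structure via $\theta$, the curvature of $\nabla$ is the image under $\theta_*$ of the Riemannian curvature of $M$; its pointwise norm is therefore bounded by $C(m,a,|\theta_*|)$, and this controls the Bochner terms arising in the local estimates below. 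The factor $\rk(E)$ enters through a pointwise-trace argument: the trace of the spectral projector of $A$ at the diagonal is bounded by $\rk(E)$ times the analogous scalar quantity for $\Delta_0+V$, up to curvature-controlled error.

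Granted this reduction, I would apply the argument of \cref{main} to $\Delta_0+V$ on $M$: cover $U_1(C)$ by geodesic balls of radius $r_0=r_0(m,a,|\theta_*|,\ve)$ with uniformly bounded overlap multiplicity, the number of balls being $O(\vol(U_1(C)))$ by the volume comparison for pinched negative curvature; localize each low-energy test function via a subordinate partition of unity with a loss of at most $\ve/2$ in the Rayleigh quotient; bound the contribution of the ends $M\setminus U_R(C)$ for large $R$ via the fact that the local bottom of the spectrum of $A$ outside a large neighborhood of $C$ approaches $\lambda_0$ (this also yields $\lambda_{\ess}(A)\ge\lambda_0$); and on each ball of the cover bound the number of small Dirichlet eigenvalues by a constant depending only on $m,a,|\theta_*|,\ve$, multiplied by $\rk(E)$. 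The main obstacle is precisely this last, local estimate: for a Schr\"odinger operator with arbitrary bounded-below potential $V$, one needs the Dirichlet eigenvalue count on a fixed-size ball to be controlled \emph{relative} to the moving threshold $\lambda_0(A,H)$, uniformly in $V$. This requires a quantitative comparison between the bottom of the spectrum on the ball and on the universal cover $H$ that is valid across the whole admissible class of Schr\"odinger operators --- the bundle-valued analog of the Buser--Colbois--Dodziuk piece-of-cheese estimate mentioned in the introduction.
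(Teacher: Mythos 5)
Your proposal does not follow the paper's actual argument, and it has gaps that are not merely technical.

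The central problem with the Kato-inequality reduction is twofold. First, $s\mapsto|s|$ is not linear, so a $k$-dimensional space $W$ of sections with small $A$-Rayleigh quotient does not yield a $k$-dimensional space of scalar functions; the dimension count, which is what $N_A(\lambda_0-\ve)$ measures via min--max, is lost. Your remedy --- bounding the trace of the spectral projector at the diagonal --- is stated too vaguely to repair this, and on a non-compact, possibly infinite-volume manifold one cannot integrate such a pointwise bound over $M$; the paper avoids this by first compressing all small-eigenvalue data into the fixed compact set $\{r\le s_\ve\}$ via the cut-off and amenability argument of \cref{thmdir}, a step your proposal does not supply. Second, even granting the reduction, the thresholds do not match: Kato gives $\lambda_0(\Delta_0+v,H)\le\lambda_0(A_H)$ (with $v$ the pointwise least eigenvalue of the endomorphism $V$; your expression $V|s|^2$ is not meaningful as written), but the inequality can be strict --- see the discussion of $\lambda_0(\Delta_k,H)$ in the subsection on differential forms --- so scalar functions with Rayleigh quotient below $\lambda_0(A_H)-\ve$ need not be ``small'' for the scalar problem, and the machinery of \cref{main} does not apply to them. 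The bundle structure via $\theta$ is not there to control a Bochner curvature term, as you suggest; it is there so that in harmonic coordinates the connection coefficients are $O(\delta|\theta_*|)$, which makes the local Neumann-eigenvalue comparison with the Euclidean ball go through on sections directly.

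That local Neumann comparison is exactly the step you flag as the ``main obstacle,'' and it is precisely what the paper resolves in \cref{secneu} under \cref{asu}: on a ball of radius $\rho$ the connection Laplacian is, in harmonic coordinates, a small perturbation of the Euclidean operator, whose first nonzero Neumann eigenvalue scales like $\rho^{-2}$; since $\lambda_0(A_H)\le c_1(m,|\theta_*|)$ by \cref{lambdae}, one can choose $\rho_2=\rho_2(m,a,|\theta_*|)$ so small that the $(\rk E+1)^{\rm st}$ Neumann eigenvalue exceeds $\ell_3(m)\lambda_0$, independently of $V$ (since $V\ge0$ only helps). There is no need for a ``piece-of-cheese'' estimate relative to the moving threshold $\lambda_0(A,H)$; the comparison is with the fixed Euclidean model and the uniform upper bound on $\lambda_0$. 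So the gap you identify is real in your framework, but it is an artifact of the scalar reduction, which the paper avoids; you should work directly with sections, establish \cref{asu}, and combine \cref{thmdir}, \cref{thmnum}, and \cref{volest} as in the paper.
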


\cref{main} is an instance of the case $\theta_*=0$ of \cref{maina}.
The second assertion of \cref{maina} is a special case of \cite[Theorem B]{BallmannPolymerakis22a}.

\subsection{Applications to differential forms}\label{forms}
The most interesting situation of \cref{maina} is when there is a natural lower bound for $A$
which is strictly smaller than $\lambda_0$.
For example, if
\begin{align*}
	A = \Delta_k = (d+d^*)^2
\end{align*}
is the Hodge-Laplacian on differential $k$-forms, then zero is a natural lower bound.
On the other hand, if $\delta_k=(m-1-k)a-k>0$ for some $0\le k<(m-1)/2$, then
\begin{align}\label{lambdak}
	\lambda_0(\Delta_{k},H) = \lambda_0(\Delta_{m-k},H) \ge \delta_k^2/4 > 0,
\end{align}
by \cite[Theorem 2]{Kasue94} (or by the later \cite[Corollary 5.4]{BallmannBruening01}),
and this estimate is sharp \cite[Example 5.5]{BallmannBruening01}.
For the real hyperbolic space with sectional curvature $-1$, that is, $a=1$, \eqref{lambdak} is an equality.
In fact, for all the hyperbolic spaces $H=H_\F^\ell$, except possibly for the octonionic hyperbolic plane, 
$\lambda_0(\Delta_{k},H)$ has been determined explicitly \cite{Donnelly81,Pedon99,Pedon05}.
If we normalize their metric so that the minimum of their sectional curvature is $-1$ and if $k<m/2$, then
\begin{align}
	\lambda_0(\Delta_{k},H) = \lambda_0(\Delta_{m-k},H)
	= \begin{cases}
	(m-1-2k)^2/4 &\text{if $H=H_\R^m$}, \\
	(\ell-k)^2/4 &\text{if $H=H_\C^\ell$}. \\ 
	\end{cases}
\end{align}
Moreover, for all hyperbolic spaces, $\lambda_0(\Delta_{k},H_\F^\ell)>0$ unless $|k-m/2|<1$,
and then $\lambda_0(\Delta_{k},H_\F^\ell)=0$.
In the latter case, our estimate does not apply.

For $0\le k\le m$,
denote by $\mathcal H^k(M)$ the space of square-integrable harmonic $k$-forms on $M$,
that is, the eigenspace of $\Delta_k$ for the eigenvalue zero.
In the context of \cref{maina}, it is interesting to recall that,
if $M$ is a geometrically finite real hyperbolic manifold of even dimension $m=2k$,
$\mathcal H^k(M)$ is infinite dimensional if the volume of $M$ is infinite,
by \cite[Theorem 1.5]{MazzeoPhillips90} of Mazzeo and Phillips.
Since $\lambda_0(\Delta_k,H^{2k}_\R)=0$,
this is an example for the possibility that $N_A(\lambda_0)=\infty$,
contrasting the finiteness result of Li in the case of $\Delta_0$,
which we cited above.

In \cite{DiCerboStern20}, Di Cerbo and Stern obtain upper bounds for the dimensions
of the spaces $\mathcal H^k(M)$
on compact or finite volume real and complex hyperbolic manifolds $M$.
For example,
in the case of compact complex hyperbolic manifolds $M=\Gamma\backslash H_\C^\ell$
and $k<\ell=m/2$, they obtain that
\begin{align*}
	\frac{\dim\mathcal H^k(M)}{\vol M} \le \frac{C(m,k)}{\vol B(r)^{1-k/m}},
\end{align*}
where $r=\inj(M)$ is the injectivity radius of $M$ and $B(r)\subseteq H_\C^{\ell}$
is a geodesic ball of radius $r$ \cite[Theorem 2]{DiCerboStern20}.
In comparison, \cref{maina} implies that
\begin{align*}
	\frac{\dim\mathcal H^k(M)}{\vol M}
	\le C(m,1/2,k,(\ell-k)^2/8)\binom{m}{k} = C'(m,k).
\end{align*}
Their estimate is better if $\inj(M)$ is close to infinity.
Di Cerbo and Stern have similar estimates in other cases,
but we refer to their article for further comparison, discussion, and references.

\subsection{Some remarks before we start}
\label{susome}
The proofs of Theorems \ref{main} and \ref{maina} were inspired by the articles
\cite{BuserColboisDodziuk93} of Buser, Colbois, and Dodziuk
and \cite{Hamenstaedt04} by Hamenst\"adt.
Our way of estimating the number of small eigenvalues is via Neumann eigenvalues
with respect to coverings of certain domains in $M$ by small geodesic balls,
and to that end we need that non-zero Neumann eigenvalues of $A$
on sufficiently small geodesic balls in $M$ are arbitrary large.
It is not difficult to see this in the case of the Laplacian on functions.
In the general case, where we need control on the connection of $E$,
this holds true in the case where the pull-back of $E$ to the universal covering of $M$
is an associated bundle as above.
We will see this in the last section of the text, where we discuss both cases separately.

The assertion in \cref{maina} is stable under adding a constant to the potential $V$ of $A$.
For that reason, we assume throughout the body of the text that $V\ge0$.
In particular, we then have that $A\ge0$.

\section{Preparations}
\label{secpre}
Throughout the article,
we let $H$ be a simply connected and complete Riemannian manifold
with negative sectional curvature $-1\le K\le-a^2$.
We let $\Gamma$ be a group acting properly discontinuously and isometrically on $H$
such that any element of $\Gamma$, that fixes some point of $H$, acts by the identity on $H$.
Then the quotient $M=\Gamma\backslash H$ is a manifold
and $H\to M$ is the universal covering projection.

\subsection{Estimating covering multiplicities}
\label{submul}
Given $\sigma>0$, we say that $Z\subseteq M$ is $\sigma$-separated
if $d(z,z')\ge\sigma$ for all $z,z'\in Z$.

\begin{lem}\label{lebesgue}
Let $B\subseteq M$ and assume that $\inj(x)\ge\sigma>0$ for all $x\in B$.
Let $Z\subseteq B$ be a maximal $2\sigma$-separated subset.
Then the balls $B_\sigma(z)$, $z\in Z$, are pairwise disjoint,
and the balls $B_{2\sigma}(z)$ cover $B$.
Moreover, if $s\ge2$ and $0<(2s+1)\sigma\le\ln2$,
then any $x\in M$ is contained in at most $\ell_s$ of the balls $B_{s\sigma}(z)$,
where
\begin{align*}
	\ell_s = \ell_s(m) = (2s+1)^m5^{m-1}/4^{m-1}.
\end{align*}
\end{lem}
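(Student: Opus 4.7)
The plan is to treat the three assertions in turn; the first two are immediate, while the third is the real content.

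For disjointness, if $y \in B_\sigma(z) \cap B_\sigma(z')$ with $z, z' \in Z$ distinct, the triangle inequality forces $d(z, z') < 2\sigma$, contradicting that $Z$ is $2\sigma$-separated. For the covering property, if some $x \in B$ satisfied $d(x, z) \ge 2\sigma$ for every $z \in Z$, then $Z \cup \{x\}$ would still be $2\sigma$-separated, contradicting the maximality of $Z$.

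For the multiplicity estimate, suppose $x \in M$ lies in $B_{s\sigma}(z_1), \ldots, B_{s\sigma}(z_k)$ for distinct $z_1, \ldots, z_k \in Z$. Then $z_1, \ldots, z_k$ form a $2\sigma$-separated subset of $B_{s\sigma}(x)$, so the balls $B_\sigma(z_i)$ are pairwise disjoint and contained in $B_{(s+1)\sigma}(x)$. Fix lifts $\tilde z_i, \tilde x \in H$. The hypothesis $\inj(z_i) \ge \sigma$ guarantees that the covering $H \to M$ restricts to an isometry on $B_\sigma(\tilde z_i)$, so $\vol(B_\sigma(z_i)) = \vol(B_\sigma(\tilde z_i))$. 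Moreover, since $M$ is complete and $H \to M$ is a local isometry, every $y \in B_{(s+1)\sigma}(x)$ admits a lift in $B_{(s+1)\sigma}(\tilde x)$ (lift a minimizing geodesic from $x$ to $y$), so $\vol(B_{(s+1)\sigma}(x)) \le \vol(B_{(s+1)\sigma}(\tilde x))$. Combining disjointness with these comparisons,
\begin{equation*}
k \cdot \min_i \vol(B_\sigma(\tilde z_i)) \le \vol(B_{(s+1)\sigma}(\tilde x)).
\end{equation*}

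The remaining task is the volume estimation in $H$, which is where the constant $\ell_s$ emerges and is the only nontrivial point of the argument. Using $K \ge -1$, Bishop's inequality bounds the right-hand side by $V_{-1}((s+1)\sigma)$, the volume of a ball of that radius in the real hyperbolic space of curvature $-1$; using $K \le 0$ together with Cartan--Hadamard, Günther's inequality bounds $\vol(B_\sigma(\tilde z_i))$ from below by the Euclidean ball volume $V_0(\sigma) = \omega_{m-1}\sigma^m/m$. Finally, $\sinh t \le t \cosh t$ together with $\cosh t \le \cosh(\ln 2) = 5/4$ yields $V_{-1}(r) \le (5/4)^{m-1} V_0(r)$ for $r \le \ln 2$. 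The hypothesis $(2s+1)\sigma \le \ln 2$ secures $(s+1)\sigma \le \ln 2$, and all the estimates combine to give
\begin{equation*}
k \le (5/4)^{m-1}(s+1)^m \le (5/4)^{m-1}(2s+1)^m = \ell_s,
\end{equation*}
as claimed. The main obstacle is merely keeping track of the volume comparison between $M$ and $H$; once the injectivity radius hypothesis has been used to transfer the disjoint balls up to $H$, everything else is standard comparison geometry.
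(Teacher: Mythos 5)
Your argument is correct, and it follows the same packing-plus-volume-comparison strategy as the paper, but the technical execution differs at a couple of points worth noting. The paper centers the big ball at one of the centers $z\in Z$: if $x\in B_{s\sigma}(z)\cap B_{s\sigma}(z')$ then $d(z,z')<2s\sigma$, so all the relevant disjoint balls $B_\sigma(z')$ sit inside $B_{(2s+1)\sigma}(z)$, and the volume comparison is then carried out directly in $M$, with an explicit remark that the Bishop--Gromov upper bound applies to $B_{(2s+1)\sigma}(z)$ even though $(2s+1)\sigma$ may well exceed $\inj(z)$. You instead center at $x$ itself, which gives the tighter containment $B_\sigma(z_i)\subseteq B_{(s+1)\sigma}(x)$, and you pass to the universal cover $H$ before comparing volumes; since $H$ is Cartan--Hadamard, this sidesteps any injectivity-radius subtlety in the volume comparison. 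On the other side of the ratio, you use the Euclidean G\"unther lower bound $V_0(\sigma)$ for the small balls, whereas the paper uses the sharper lower bound coming from constant curvature $-a^2$; the gain from your smaller outer radius $(s+1)\sigma$ compensates, so both routes yield $(5/4)^{m-1}(s+1)^m\le\ell_s$ respectively $(5/4)^{m-1}(2s+1)^m=\ell_s$. Net effect: your version is a little more elementary (no appeal to Bishop--Gromov beyond the injectivity radius), at the mild cost of discarding the $a$-dependence in the lower volume bound, which does not matter for the stated constant.
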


\begin{proof}
The first assertions are clear.
As for the last,
if $y\in B_{s\sigma}(z)\cap B_{s\sigma}(z')$ with $z,z'\in Z$, then $d(z,z')<2s\sigma$.
Hence, for any such $z,z'$, we have $B_{\sigma}(z')\subseteq B_{(2s+1)\sigma}(z)$.
Since balls of radius $\sigma$ about points in $Z$ are pairwise disjoint,
volume comparison gives that
\begin{align*}
	\ell =
	a^{m-1}\int_0^{(2s+1)\sigma}\sinh(t)^{m-1}\dt\bigg/\int_0^{\sigma}\sinh(at)^{m-1}\dt
\end{align*}
is an upper bound for the number of such $z'$ (including $z$).
Note here that the Bishop-Gromov volume bound applies to the balls $B_{(2s+1)\sigma}(z)$,
although the radius might be beyond $\inj(z)$.
The rough estimate \[t\le\sinh(t)\le 5t/4\] for $0\le t\le\ln2$ gives $\ell\le\ell_s$.
\end{proof}

\subsection{Margulis lemma and thick--thin decomposition}
\label{submar}

We recall and refine the setup in \cite[Section 4]{BallmannPolymerakis22a}.
For any subgroup $\Gamma'$ of $\Gamma$, $\rho>0$, and $x\in H$,
let $\Gamma'_\rho(x)$ be the subgroup of $\Gamma'$
generated by the elements $g\in\Gamma'$ with $d(x,gx)<\rho$.
We call
\begin{align}\label{thix}
	\T_\rho(\Gamma') = \{ x\in H \mid \text{$\Gamma'_\rho(x)$ is non-trivial}\}
	\hspace{3mm}\text{and}\hspace{3mm}
	H \setminus \T_\rho(\Gamma')
\end{align}
the \emph{$\rho$-thin} and \emph{$\rho$-thick part of $H$} with respect to $\Gamma'$ and
\begin{align}\label{thio}
	\T_\rho(M') = \Gamma'\backslash\T_\rho(\Gamma')
	\hspace{3mm}\text{and}\hspace{3mm}
	M' \setminus \T_\rho(M')
\end{align}
the \emph{$\rho$-thin} and \emph{$\rho$-thick part of $M'=\Gamma'\backslash H$}, respectively.
Clearly, the $\rho$-thick part of $M'$ is the set of $x\in M'$ with injectivity radius $\inj(x)\ge\rho/2$.

By the Margulis lemma, there is an explicit constant $\rho_0=\rho_0(m)>0$
such that $\Gamma'_\rho(x)$ is virtually nilpotent if $0<\rho\le\rho_0(m)$
\cite[§ 9.5]{BallmannGromovSchroeder85}.
Recall that $\rho_0(m)$ is a very small positive number so that, by far, $10\rho_0(m) < \ln2$.
For $0<\rho\le\rho_0$, there are two sources for the components of $\T_{\rho}(M)$,
parabolic ends and short closed geodesics;
compare with \cite[§10]{BallmannGromovSchroeder85}.

Let $P\subseteq H_\iota$ be the set of parabolic points of $\Gamma$.
For any $p\in P$, the $\rho$-thin part $U_p=\T_{\rho}(\Gamma_p)$ of $H$
with respect to the  isotropy group $\Gamma_p$ of $p$ in $\Gamma$
is a component of $\T_{\rho}(\Gamma)$.
Note that $U_p\cap U_q=\emptyset$ for all $p\ne q$ in $P$.
Clearly, $U_{gp}=gU_p$ for all $g\in\Gamma$,
and therefore the image $V_p$ of $U_p$ in $M$ only depends on the orbit of $p$ under $\Gamma$.
The components of the ${\rho}$-thin part of $M$ due to parabolic ends is the union
\begin{align*}
	V_{\rm par} = \cup V_p,
\end{align*}
which is disjoint up to passing to $\Gamma$-orbits.

Next, let $Q_{\rho}$ be the set of geodesics $[x,y]$ with $x\ne y$ in $H_\iota$
such that $[x,y]$ is the axis of a hyperbolic isometry $h\in\Gamma$
which shifts $[x,y]$ by less than ${\rho}$.
Then the isotropy group $\Gamma_{[x,y]}$ of $[x,y]$ (as a set) in $\Gamma$
is infinite cyclic.
For any geodesic $[x,y]$ in $Q_{\rho}$,
the ${\rho}$-thin part $U_{[x,y]}=T_{\rho}(H,\Gamma_{[x,y]})$ of $H$ with respect to $\Gamma_{[x,y]}$
is a component of $T_{\rho}(\Gamma)$.
Note that the different $U_{[x,y]}$ are pairwise disjoint and that they are also disjoint
from the $U_p$ above.
Clearly, $U_{[gx,gy]}=gU_{[x,y]}$ for all $g\in\Gamma$,
and therefore the image $V_{[x,y]}$ of $U_{[x,y]}$ in $M$ only depends on the orbit of $[x,y]$ under $\Gamma$.
The components of the ${\rho}$-thin part of $M$ due to short closed geodesics is the union
\begin{align*}
	V_{\rm scc} = \cup V_{[x,y]},
\end{align*}
which is disjoint up to passing to $\Gamma$-orbits.

Note that the above $V_p$ and $V_{[x,y]}$ depend on the choice of $\rho$,
hence also $V_{\rm par}$ and $V_{\rm scc}$.

\subsection{Harmonic coordinates}
\label{subharm}
Using harmonic coordinate gives us the following;
see \cite[Section 5]{JostKarcher82} (or also \cite[Main Lemma 2.2]{Anderson90}).

\begin{lem}\label{harmonic}
Given $\delta$, there is a $\sigma=\sigma(m,\delta)>0$ such that, for each $x\in H$,
there is a coordinate system \[U_x\to B_\sigma^m(0)\subseteq\R^m\] about $x$ mapping $x$ to $0$,
such that
\begin{align*}
	g_0\le g \le (1+\delta)^2g_0
	\quad\text{and}\quad
	|dg| \le \delta,
\end{align*}
where $g_0$ denotes the Euclidean metric on $B_\sigma^m(0)$.
\end{lem}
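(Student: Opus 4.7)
The plan is to apply the harmonic coordinate construction of Jost-Karcher, as cited. The input that their theorem requires is a uniform two-sided sectional curvature bound together with a uniform positive lower bound on the injectivity radius; its output is a harmonic chart on a ball of controlled radius in which the metric components and their first derivatives are close to the Euclidean ones, with all constants depending only on $m$, on the curvature bound, and on the injectivity radius lower bound. In our setting, $-1 \le K \le -a^2$ is built into the hypothesis on $H$, and since $H$ is simply connected with non-positive sectional curvature, the Cartan-Hadamard theorem gives $\inj(x) = \infty$ at every $x \in H$. Hence the hypotheses of Jost-Karcher hold uniformly at every point of $H$, and the resulting $\sigma$ depends only on $m$ and $\delta$ (the curvature lower bound $-1$ being absolute).

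More concretely, I would proceed as follows. Fix $x \in H$. On a geodesic ball $B_r(x)$ of some radius $r > 0$ depending only on $m$, introduce normal coordinates centred at $x$. Jacobi field comparison against the constant curvature model spaces, using only the bounds on $K$, yields explicit $C^{1,\alpha}$ control of the metric components in these normal coordinates that is independent of $x$. Next, replace the normal coordinates by harmonic ones: solve, on $B_r(x)$, the Dirichlet problems $\Delta u_i = 0$ with boundary values $u_i|_{\partial B_r(x)} = x_i$, where $x_1, \ldots, x_m$ denote the normal coordinate functions. The maximum principle applied to $u_i - x_i$, together with interior elliptic estimates, shows that $u = (u_1, \ldots, u_m)$ is a diffeomorphism from a slightly smaller concentric ball onto its image, and this diffeomorphism, after suitable identification of its image with a Euclidean ball, is the chart $U_x \to B_\sigma^m(0)$.

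To extract the quantitative bounds one invokes the crucial feature of harmonic coordinates: the components $g_{ij}$ satisfy the quasilinear elliptic system
\begin{align*}
	-\frac{1}{2} g^{kl} \partial_k \partial_l g_{ij} = \Ric_{ij} + Q(g, \partial g),
\end{align*}
whose inhomogeneity is controlled purely by the sectional curvature. Schauder estimates then deliver $C^{1,\alpha}$-bounds on $g$ in the harmonic chart, with constants depending only on $m$ and $a$. Since $g(x) = g_0$ at the origin, the $C^1$-norm of $g - g_0$ on a chart of radius $\sigma$ tends to zero as $\sigma \to 0$, so the bounds $g_0 \le g \le (1+\delta)^2 g_0$ and $|dg| \le \delta$ can be arranged by choosing $\sigma = \sigma(m, \delta)$ sufficiently small. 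The main obstacle is precisely the uniform character of these Schauder estimates across all of $H$ — which is the whole reason for passing to harmonic rather than normal coordinates — but Jost-Karcher's construction is tailored to produce constants that depend only on $m$, on the curvature bound, and on the injectivity radius lower bound, all of which are under control in our situation, so the uniform $\sigma$ asserted in the lemma follows.
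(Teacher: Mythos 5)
Your proposal is correct and takes essentially the same route as the paper: the paper proves the lemma by direct appeal to the harmonic-coordinate theorems of Jost--Karcher (and Anderson), combined with the observation that the injectivity radius is infinite on the Cartan--Hadamard manifold $H$, so the resulting radius $\sigma$ depends only on $m$ and $\delta$. Your sketch simply unpacks what those citations contain; the only minor imprecision is that normal coordinates alone do not immediately give $C^{1,\alpha}$ control of $g$ from a $C^0$ curvature bound — that is precisely what the passage to harmonic coordinates and the Schauder estimate buys you — but since you carry out that passage anyway, the argument is sound.
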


Note that the injectivity radius does not interfere here since it is infinite for all points in $H$.
Note also that, with respect to coordinates about $x$ as above,
\begin{align}\label{3balls}
	B_{\rho}(x) \subseteq B_\rho^m \subseteq B_{(1+\delta)\rho}(x)
\end{align}
for all $0<\rho\le\sigma(m,\delta)$,
where the ball in the middle is the Euclidean ball with center at the origin.
For each $x\in H$, we fix such coordinates and let $U_\rho(x)$ be the neighborhood of $x$
which corresponds to $B_\rho^m$.

Note that the Christoffel symbols associated to the coordinates are bounded in terms of $\delta$.
Apply the Gram-Schmidt orthonormalization process to the coordinate frame $(\partial/\partial x^i)$
to get an orthonormal frame $X=(X_i)$ on $U_x$.
Since the Gram-Schmidt process only involves the Riemannian metric $g$,
\begin{align}\label{hframe}
	|X_i-\partial/\partial x^i|\le c_0(m)\delta
	\quad\text{and}\quad
	|\nabla_{X_i}X_j|\le c_0(m)\delta.
\end{align}

\subsection{Associated bundles}
\label{susab}
View an orthonormal frame of $H$ at $x\in H$ in two ways,
namely as a basis of $T_xH$ or as an isomorphism $\R^m\to T_xH$,
and similarly for a Riemannian vector bundle $E$ over $H$.
A frame of $H$ or $E$ over an open subset $U$ of $H$
is a family of frames of $H$ or $E$ which is defined and smooth on $U$.

Fix an orientation of $H$ and let $\SO(H)$ be the principal bundle
of oriented orthonormal frames of $H$ with structure group $\SO(m)$.
Let $G\to\SO(m)$ be a covering of connected groups
and $P\to M$ be a principal bundle with structure group $G$
together with a commutative triangle
\begin{equation}\label{abct}
\begin{tikzcd}
	P \arrow[r] \arrow[dr] & \SO(H) \arrow[d] \\
	& H
\end{tikzcd}
\end{equation}
such that the horizontal arrow is compatible with the right actions of $G$ on $P$
and $\SO(m)$ on $\SO(H)$, respectively.
The main examples are the identity of $\SO(H)$ and spin structures,
but, in dimension two, there are further possiblilities.

Let $\theta$ be an orthogonal representation of $G$ on a Euclidean vector space $E_0$
and let $E\to H$ be the bundle over $H$ associated to $\theta$.
Recall that $E$ consists of equivalence classes $[p,u]$ with $p\in P$ and $u\in E_0$,
where $[pg,u]=[p,gu]$ for all $p\in G$.
We also write $p_\theta u=[p,u]$ and consider $p_\theta$ as a frame of $E$.
Fix an orthonormal basis $(f_1,\dots,f_k)$ of $E_0$.

The scalar product of $E_0$ induces a Riemannian metric on $E$.
Furthermore, the Levi-Civita connection induces a metric connection on $E$
(as does any other metric connection on $M$) by the rule
\begin{align}\label{abcd}
	\nabla_X(p_\theta u)
	= p_\theta(du(X) + \theta_*(\gamma(X))u),
\end{align}
where we write a smooth section of $E$ locally in the form $p_\theta u$
for some local frame $p_\theta$ of $E$ and a smooth map $u$ to $E_0$
and where $\gamma$ is the $(2,1)$-tensor field of Christoffel symbols
of the frame of $H$ associated to $p$.

\subsection{An upper bound for $\lambda_0$}
\label{subupp}
An immediate consequence of \cite[Theorem 1.1]{Cheng75} is that,
for $A=\Delta_0$, the Laplacian on functions, we have
\begin{align}\label{lambda3}
	\lambda_0(\Delta_{0,H}) \le \lambda_{\ess}(\Delta_{0,H}) \le \frac{(m-1)^2}4.
\end{align}
Namely, Cheng shows that the eigenfunctions for the smallest Dirichlet eigenvalue
on balls in the model space, when transferred to balls in $H$ of the same radius,
have Rayleigh quotient at most the one in the model space.

Whereas \eqref{lambda3} is an optimal upper bound,
we do not know an optimal upper bound for $\lambda_0$ in the general case.

\begin{prop}\label{lambdae}
For $E_H$ an associated bundle as in \cref{susab} and $\Delta_H$ its connection Laplacian,
we have
\begin{align*}
	 \lambda_0(\Delta_H)\le \lambda_{\ess}(\Delta_H) \le c_1(m,|\theta_*|).
\end{align*}
\end{prop}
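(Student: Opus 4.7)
The plan is to establish the two inequalities separately. The bound $\lambda_0(\Delta_H) \le \lambda_{\ess}(\Delta_H)$ is immediate from the inclusion of the essential spectrum in the spectrum. For the upper bound on $\lambda_{\ess}$, I would construct, around every point $x_0 \in H$, a nonzero smooth compactly supported section $s$ of $E_H$ with Rayleigh quotient $R(s) := \int|\nabla s|^2/\int|s|^2 \le c_1(m,|\theta_*|)$ and with support contained in a neighborhood of $x_0$ of universal radius. Applying this to a sequence $x_n \in H$ escaping to infinity with pairwise disjoint neighborhoods yields an infinite $L^2$-orthogonal system of sections, each with Rayleigh quotient bounded by $c_1$; the min-max principle then forces $\lambda_{\ess}(\Delta_H) \le c_1$.

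Fix $\delta=1$ and let $\rho=\sigma(m,1)$ from \cref{harmonic}. Given $x_0 \in H$, work in the harmonic coordinate neighborhood $U_\rho(x_0)$; it is diffeomorphic to $B_\rho^m$ and hence simply connected. The orthonormalized frame $X=(X_i)$ of \cref{harmonic} satisfies $|\nabla_{X_i}X_j| \le c_0(m)$ by \eqref{hframe}. Viewing $X$ as a local section of $\SO(H)$ over $U_\rho(x_0)$, the covering $G\to \SO(m)$ together with the simple connectivity of $U_\rho(x_0)$ allows us to lift $X$ to a local section $p$ of $P$. Define then a local orthonormal frame of $E_H$ on $U_\rho(x_0)$ by $e_j = p_\theta f_j$.

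By the formula \eqref{abcd} for the induced connection, $\nabla_Y e_j = p_\theta(\theta_*(\gamma(Y))f_j)$, where $\gamma$ is the $\mathfrak{so}(m)$-valued connection 1-form of $X$; its matrix entries are $\langle \nabla_{X_i}X_j, X_k\rangle$ and are therefore bounded by $c_0(m)$, so $|\nabla e_j| \le c(m)|\theta_*|$ on $U_\rho(x_0)$. Choose now a nonzero $\phi \in C_c^\infty(U_\rho(x_0))$ whose Rayleigh quotient is bounded by a constant $c_2(m)$ depending only on $\rho$, for instance the pullback via the harmonic coordinates of the first Dirichlet eigenfunction of the Euclidean ball $B_\rho^m$, using the metric comparison \eqref{3balls}. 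Set $s = \phi \cdot e_1$. Since $|e_1|$ is constant, $\langle e_1, \nabla_Y e_1\rangle = 0$ for every $Y$, so the cross term vanishes and
\[
|\nabla s|^2 = |d\phi|^2 + \phi^2|\nabla e_1|^2 \le |d\phi|^2 + c(m)^2|\theta_*|^2\phi^2.
\]
Integrating and dividing by $\int |\phi|^2$ gives $R(s) \le c_2(m) + c(m)^2|\theta_*|^2 =: c_1(m,|\theta_*|)$.

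The main obstacle is the construction of a local orthonormal frame of $E_H$ with uniformly controlled covariant derivative. This is resolved by coupling \cref{harmonic}, which provides a local frame of $H$ with controlled Christoffel symbols in harmonic coordinates, with the associated bundle structure of \cref{susab}: the lift of $X$ to a frame $p$ of $P$ exists because the coordinate ball is simply connected, and then \eqref{abcd} transports the bound on the Christoffel symbols of $X$ to a bound on $\nabla e_j$ at the cost of only a multiplicative factor of $|\theta_*|$. From this point the argument reduces to the standard scalar Dirichlet eigenvalue estimate on a ball of fixed radius.
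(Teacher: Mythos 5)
Your proof is correct and follows essentially the same strategy as the paper: exploit harmonic coordinates (Lemma \ref{harmonic}) to get an orthonormal frame of $H$ with Christoffel symbols bounded by $c_0(m)\delta$, lift it to a section of the principal bundle $P$ over the simply connected coordinate chart, transport this frame to a local orthonormal frame of $E_H$ with $|\nabla e_j|$ controlled by $|\theta_*|$ via \eqref{abcd}, multiply by a compactly supported cutoff with bounded Rayleigh quotient, and finally exhaust $H$ by such disjointly supported test sections to force down the essential spectrum. The only cosmetic differences are your choice of the Euclidean Dirichlet eigenfunction rather than the cone function $1-|y|/\sigma$ used in the paper, your value $\delta=1$ rather than $\delta=1/2$, and that you make explicit the vanishing of the cross term in $|\nabla(\phi e_1)|^2$ and the escaping-to-infinity argument, both of which the paper leaves implicit.
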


\begin{proof}
Let $\sigma=\sigma(m,1/2)$ in \cref{harmonic} and $x\in H$.
Fix harmonic coordinates on $U_x$ about $x$
and consider the orthonormal frame $(X_i)$ on $U_x$ as in \cref{subharm}.
Let $p$ be a lift of the frame to $P$ over $U_x$.
Then sections of $E$ over $U_x$ can be written as $[p,u]=p_\theta u$,
where $u$ is a map from $U_x$ to $E_0$.
Choose a unit vector $u_0\in E_0$ and let $u(y)=\vf(|y|/\sigma)u_0$,
where $\vf(t)=1-t$ on $[0,1]$ and $U_x$ is viewed as the Euclidean ball $B_\sigma^m(0)$.
Then
\begin{align*}
	\|\nabla_{X_i}(p_\theta u)\|_2
	&= \|p_\theta(d\vf(X_i)u_0 + \vf\theta_*(\gamma(X_i))u_0)\|_2 \\
	&\le (1 + |\theta_*|c_0(m,1/2))\vol U_x
	\le c_1(m,|\theta_*|) \|u\|_2. 
\end{align*}
Since $x\in H$ is arbitrary, this yields the right inequality.
\end{proof}

\subsection{Amenable coverings}
\label{subame}
In the proof of our main results we will use \cite[Proposition 4.13]{Polymerakis20a},
which we summarize as follows:

\begin{prop}\label{pp}
Let $p\colon M_2\to M_1$ be an amenable Riemannian covering of Riemannian manifolds.
Let $A_1$ be a formally self-adjoint differential operator on a vector bundle $E_1$ over $M_1$
and $A_2$ be the lift of $A_1$ to the lift $E_2$ of $E_1$.
Then, for any non-zero $u_1\in C^\infty_c(M_1,E_1)$, $\lambda\in\R$, and $\ve>0$,
there exists a non-zero $u_2\in C^\infty_c(M_2,E_2)$ with $\|u_2\|_2 = \|u_1\|_2$ such that
\begin{align*}
	\Ray_{A_2}(u_2) \le \Ray_{A_1}(u_1) + \ve
\end{align*}
\end{prop}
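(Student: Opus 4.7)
The plan is to realize $u_2$ as a carefully chosen cutoff of the lift of $u_1$ to $M_2$, where amenability of the covering provides cutoff functions whose derivatives are negligible compared to themselves on the relevant part of $M_2$.

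First, lift $u_1$ to a smooth section $\tilde u_1$ of $E_2$ by setting $\tilde u_1(y)=u_1(p(y))$ under the fibrewise isometric identification of $E_2$ with the pull-back of $E_1$. The section $\tilde u_1$ has support in $p^{-1}(\supp u_1)$, which is closed but generally not compact, and satisfies $A_2\tilde u_1=\widetilde{A_1u_1}$ since $A_2$ is the lift of $A_1$. Next, produce suitable cutoffs: by a Brooks--Kesten type characterization of amenability of the covering, for every compact $K\subseteq M_1$, every $\eta>0$, and every positive integer $k$, there exists a nonnegative, smooth, compactly supported function $\chi$ on $M_2$ with $\int_{p^{-1}(K)}\chi^2>0$ and
\[
\int_{p^{-1}(K)} |\nabla^j\chi|^2 \le \eta \int_{p^{-1}(K)} \chi^2
\quad\text{for } 1\le j\le k.
\]
Such cutoffs can be produced by convolving characteristic functions of F{\o}lner subsets of the coset space $\pi_1(M_1)/p_*\pi_1(M_2)$, lifted to $M_2$, with a fixed bump supported in a fundamental domain of $p$.

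Set $u_2=\chi\tilde u_1$; this is compactly supported and nonzero if $K\supseteq\supp u_1$. If $k$ is the order of $A$, then
\[
A_2u_2 = \chi\,\widetilde{A_1u_1} + [A_2,\chi]\tilde u_1,
\]
so that
\[
\la A_2u_2,u_2\ra = \int_{M_2}\chi^2 \la\widetilde{A_1u_1},\tilde u_1\ra + R(\chi,\tilde u_1),
\]
where $R$ is a finite sum of integrals involving $\tilde u_1$ and derivatives of $\chi$ up to order $k$. Unfolding via the covering map, the main term equals $\int_{M_1}\Phi\,\la A_1u_1,u_1\ra$ with $\Phi(x)=\sum_{y\in p^{-1}(x)}\chi(y)^2$, and similarly $\|u_2\|_2^2=\int_{M_1}\Phi\,|u_1|^2$. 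In particular, the main term contributes exactly $\Ray_{A_1}(u_1)\cdot\|u_2\|_2^2$ to $\la A_2u_2,u_2\ra$. Estimating $R$ by Cauchy--Schwarz and the F{\o}lner-type bound above, one obtains $|R|/\|u_2\|_2^2\le\ve$ provided $\eta$ is chosen small enough, whence $\Ray_{A_2}(u_2)\le\Ray_{A_1}(u_1)+\ve$. A final rescaling enforces $\|u_2\|_2=\|u_1\|_2$.

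The main obstacle is the construction of cutoffs whose higher derivatives are all controlled by the cutoff itself on the set $p^{-1}(\supp u_1)$. Amenability translated into a plain F{\o}lner condition readily yields cutoffs with $\|d\chi\|_2$ small relative to $\|\chi\|_2$; to get the analogous bound for $\|\nabla^j\chi\|_2$ with $j$ up to the order of $A$, one iterates the convolution smoothing, or uses heat kernel cutoffs on the coset space, and then checks that the resulting $\chi$ still satisfies the non-degeneracy $\int_{p^{-1}(K)}\chi^2>0$ uniformly so that the normalization step does not spoil the Rayleigh estimate. For second order operators, as needed in the applications of the paper, this reduces to the classical Brooks argument, but the general statement requires this additional care.
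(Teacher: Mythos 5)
The paper does not prove this proposition; it cites \cite[Proposition 4.13]{Polymerakis20a} and only gives a one-sentence description of the strategy (``a sophisticated choice of cutoff functions to turn the lift of $u_1$ to $M_2$ into a section with the asserted properties''). Your outline follows the same general strategy, but as written it contains a genuine gap.

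The problem is the claim that ``the main term contributes exactly $\Ray_{A_1}(u_1)\cdot\|u_2\|_2^2$.'' After unfolding you obtain
\begin{align*}
\int_{M_2}\chi^2\la \widetilde{A_1u_1},\tilde u_1\ra
=\int_{M_1}\Phi\,\la A_1u_1,u_1\ra,
\qquad
\|u_2\|_2^2=\int_{M_1}\Phi\,|u_1|^2,
\end{align*}
with $\Phi(x)=\sum_{y\in p^{-1}(x)}\chi(y)^2$. The quotient of the right-hand sides is a $\Phi$-weighted Rayleigh quotient; it equals the unweighted $\Ray_{A_1}(u_1)$ only when $\Phi$ is constant on $\supp u_1$, and nothing in the F{\o}lner-type bound
$\int_{p^{-1}(K)}|\nabla^j\chi|^2\le\eta\int_{p^{-1}(K)}\chi^2$
forces that. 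In fact, your proposed construction does not make $\Phi$ constant: if $\chi$ is built by translating a bump $\beta$ supported in one fundamental domain over a F{\o}lner set $F$, then $\Phi(x)=|F|\,\beta(y_0)^2$ for the unique preimage $y_0$ of $x$ in the fundamental domain, which depends on $x$. Worse, such a $\chi$ has $\|\nabla\chi\|_2^2/\|\chi\|_2^2$ equal to a fixed positive constant (the Rayleigh quotient of $\beta$ on one fundamental domain), independent of $F$, so it does not even satisfy the F{\o}lner-type bound you state. To make the argument go through, $\chi$ must be close to the characteristic function of a union of F{\o}lner-many fundamental domains (equivalently, built from a $\Gamma$-equivariant partition of unity), so that $\chi\equiv 1$ on the bulk and transitions only near the combinatorial boundary; then $\Phi$ is close to a constant $|F|$, with relative error controlled by the F{\o}lner condition, and the resulting deviation from $\Ray_{A_1}(u_1)$ is of size $O(\eta)\cdot\|A_1u_1\|_2/\|u_1\|_2$, which can be absorbed into $\ve$. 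This extra step, controlling the variation of $\Phi$ alongside the derivative estimates for $\chi$, is the crux of the ``sophisticated choice'' alluded to in the paper, and it is missing from your argument. Your remarks about higher-order derivative control for operators of order $>2$ are reasonable in spirit but remain a sketch; they should be carried out alongside the $\Phi$-normalization discussed above.
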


The proof consists of a sophisticated choice of cutoff functions
to turn the lift of $u_1$ to $M_2$ into a section with the asserted properties.
Amenability makes such choices possible.

\section{On Dirichlet eigenvalues}
\label{secdir}

We refine the setup in \cite[Section 4]{BallmannPolymerakis22a}.
Let $C$ be the convex core of $M$ and $\pi_C\colon M\to C$ be the projection.
Let $\rho_0=\rho_0(m,b)$ be the Margulis constant as in \cref{submar} and
\begin{align}\label{cthick}
	K_0 = C \setminus \T_{\rho_0}(M)
\end{align}
be the $\rho_0$-thick part of $C$, that is, the set of points $x\in C$ with $\inj(x)\ge\rho_0/2$.
By the definition of geometrically finite, $K_0$ is compact.

\begin{lem}\label{finvol}
If $|M|<\infty$ and $u\in C^\infty_c(M\setminus K_0,E)$,
then \[\la Au,u\ra_2\ge\lambda_0\|u\|_2^2.\]
\end{lem}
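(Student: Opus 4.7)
My plan would be as follows. Since $|M|<\infty$, we have $C=M$, so $M\setminus K_0=\T_{\rho_0}(M)$ is exactly the $\rho_0$-thin part of $M$. By the discussion in \cref{submar}, the connected components of $\T_{\rho_0}(M)$ are of two kinds: parabolic cusps $V_p$ and Margulis tubes $V_{[x,y]}$ about short closed geodesics. These components are open and pairwise disjoint; since $\supp u$ is compact and meets only finitely many of them, I would decompose $u=\sum_i u_i$ with each $u_i\in C^\infty_c(V_i,E)$ supported in a single component $V_i$. Because $A$ is local and the $V_i$ are disjoint, $\la Au,u\ra_2=\sum_i\la Au_i,u_i\ra_2$ and $\|u\|_2^2=\sum_i\|u_i\|_2^2$, so it would suffice to prove the inequality under the additional assumption that $u$ is supported in a single component $V$.

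For such a $V$, I would fix a connected component $U\subseteq H$ of its preimage under the universal covering projection $H\to M$. Then $U$ is preserved by the stabilizer $\Gamma'\subseteq\Gamma$ of the corresponding parabolic point or axis, and $V=\Gamma'\backslash U$. By Margulis's lemma and the description recalled in \cref{submar}, $\Gamma'$ is virtually nilpotent in the cusp case and infinite cyclic in the tube case, hence amenable in either case, so the normal Riemannian covering $U\to V$ is amenable. Moreover, the pull-back of $E|_V$ along this covering is canonically $E_H|_U$, and the lift of $A|_V$ is $A_H|_U$.

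Given $\ve>0$, I would apply \cref{pp} to $u$ to obtain a non-zero $\tilde u\in C^\infty_c(U,E_H)$ with $\|\tilde u\|_2=\|u\|_2$ and $\Ray_{A_H}(\tilde u)\le\Ray_A(u)+\ve$. Extending $\tilde u$ by zero to all of $H$ yields a compactly supported smooth section of $E_H$, so
\[
	\Ray_{A_H}(\tilde u)\ge\lambda_0(A_H)=\lambda_0.
\]
Letting $\ve\to 0$ gives $\Ray_A(u)\ge\lambda_0$, which is the desired inequality.

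The only substantive input is the amenability of the stabilizers $\Gamma'$, which is immediate from Margulis's lemma and the structure of the thin part, together with the fact that \cref{pp} is already available; beyond this, the proof is a reduction by locality of $A$ and disjointness of components, and I do not foresee a real obstacle. The finite-volume hypothesis enters only to force $C=M$, so that $M\setminus K_0$ is entirely contained in $\T_{\rho_0}(M)$ and the above decomposition is possible.
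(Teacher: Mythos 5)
Your proposal is correct and follows essentially the same route as the paper: since $|M|<\infty$ forces $C=M$ and hence $M\setminus K_0=\T_{\rho_0}(M)$, one reduces by locality and disjointness to a section supported in a single component $V_p$ or $V_{[x,y]}$, uses amenability of the corresponding stabilizer $\Gamma'$ to get an amenable covering by an open subset of $H$, and then invokes \cref{pp} together with the fact that $\lambda_0$ bounds Rayleigh quotients of compactly supported sections on $H$. You have merely spelled out some steps that the paper leaves implicit, in particular the disjoint-support decomposition, the identification $V=\Gamma'\backslash U$, and the explicit $\ve\to 0$ limit.
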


\begin{proof}
Since $|M|<\infty$, we have $C=M$ and therefore $M\setminus K_0=\T_{\rho_0}(M)$.
Hence we can assume that the support of $u$ is contained in one of the $V_p$ or $V_{[x,y]}$
as described in \cref{submar}.
Now the fundamental group of any of the $V_p$ or $V_{[x,y]}$ is infinite and amenable
and its universal covering is an open subset of $H$.
Hence the assertion follows from \cref{pp}.
\end{proof}

Our next aim is to get a similar statement in the case $|M|=\infty$.
To that end, we let, in this section,
\begin{align}\label{fixrho1}
	\rho = \rho_0(m)/2.
\end{align}
Let $R$ be a maximal $2\rho$-separated set of points in $K_0$.
Then the balls $B_{\rho}(x)$ about points $x\in R$ are pairwise disjoint,
and the balls $B_{2\rho}(x)$ cover $K_0$.
We obtain an open covering $\mathcal V$ of $C$ by open sets $V_x$,
where $x\in S=P\cup Q\cup R$ and $V_x=B_{3\rho}(x)$ for $x\in R$.

\begin{lem}\label{lebesgue1}
Each $y\in M$ is contained in at most $\ell_3(m)+1$ open sets $V_x$ from $\mathcal V$.
\end{lem}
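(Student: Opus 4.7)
The plan is to decompose the counting into two independent pieces: the balls $V_x = B_{3\rho}(x)$ with $x \in R$ (centered in the thick part $K_0$), and the components $V_p$, $V_{[x,y]}$ of the $\rho$-thin part of $M$ indexed by elements of $P$ and $Q$. I would bound the number of sets of each type containing a given point $y \in M$ separately, obtaining $\ell_3(m)$ for the first type and $1$ for the second type.

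For the first type, I would directly invoke \cref{lebesgue} with $B = K_0$, $\sigma = \rho$, $Z = R$, and $s = 3$. Two hypotheses need checking: first, that $\inj(x) \ge \sigma$ for every $x \in B$, which holds because $K_0 = C \setminus \T_{\rho_0}(M)$ is the $\rho_0$-thick part of $C$ and $\rho = \rho_0/2$, so $\inj(x) \ge \rho_0/2 = \rho$ on $K_0$; second, that $(2s+1)\sigma = 7\rho \le \ln 2$, which follows from the estimate $10\rho_0(m) < \ln 2$ recalled in \cref{submar}. The conclusion of \cref{lebesgue} then asserts that any $y \in M$ lies in at most $\ell_3(m)$ of the balls $B_{3\rho}(z)$ with $z \in R$, which is exactly the bound I want for the sets $V_x$ with $x \in R$.

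For the second type, I would appeal to the construction recalled in \cref{submar}: the $V_p$ with $p \in P$ and the $V_{[x,y]}$ with $[x,y] \in Q$ are distinct components of the $\rho$-thin part $\T_\rho(M)$, hence pairwise disjoint. Consequently, $y$ belongs to at most one such $V_p$ or $V_{[x,y]}$. Summing the two bounds yields the stated multiplicity $\ell_3(m) + 1$.

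There is no real obstacle here once the two observations are in place; the only non-automatic point is the verification that $(2s+1)\sigma \le \ln 2$ so that \cref{lebesgue} applies with $s = 3$, and this reduces to the quantitative smallness of the Margulis constant already built into the definition $\rho = \rho_0(m)/2$.
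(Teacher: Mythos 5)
Your proof is correct and follows essentially the same decomposition as the paper's own argument: at most one of the (pairwise disjoint) $V_x$ with $x\in P\cup Q$, and at most $\ell_3(m)$ of the balls $B_{3\rho}(x)$ with $x\in R$ via \cref{lebesgue}. The only difference is that you explicitly verify the hypothesis $(2s+1)\sigma\le\ln 2$, which the paper leaves implicit; your check is accurate since $7\rho = 7\rho_0/2 < 10\rho_0 < \ln 2$.
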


\begin{proof}
Since the sets $V_x$ with $x\in P\cup Q$ are pairwise disjoint,
$y$ is contained in at most one of these $V_x$
(up to passing to $\Gamma$-orbits).
On the other hand, since $\inj(x)\ge\rho$ for all $x \in K_0$,
$y$ is contained in at most $\ell_3(m)$ of the balls $B_{3\rho}(x)$ with $x\in R$,
by \cref{lebesgue}.
\end{proof}

For any $x\in S$, let $\tau_x$ be the continuous function on $C$
that is equal to one on points of $C\cap V_x$ which are away from $C\setminus V_x$ by at least $\rho$,
vanishes on $C\setminus V_x$,
and is linear in the distance to $C\setminus V_x$ in between.
Each $\tau_x$ is Lipschitz continuous with Lipschitz constant $1/\rho$.

\begin{lem}\label{taux}
For any $y\in C$,
\begin{align*}
	\sum_{x\in S} \tau_x(y)^2 \ge 1/2.
\end{align*}
\end{lem}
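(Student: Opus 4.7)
The plan is to prove the bound by exhibiting, for each $y\in C$, at most two functions $\tau_x$ whose squares already sum to at least $1/2$. I split on whether $y$ lies in the thick part $K_0$ or in the thin part $C\cap\T_{\rho_0}(M)$, since these exhaust $C$.

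If $y\in K_0$, maximality of $R$ as a $2\rho$-separated subset of $K_0$ furnishes an $x\in R$ with $d(y,x)<2\rho$. Hence $y\in V_x=B_{3\rho}(x)$ and the triangle inequality gives $d(y, C\setminus V_x)\ge 3\rho-d(y,x)>\rho$, so $\tau_x(y)=1$ and the sum is at least $1$.

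Suppose instead $y\in C\cap\T_{\rho_0}(M)$. Since the thin components $V_x$ with $x\in P\cup Q$ are pairwise disjoint open subsets of $M$, $y$ belongs to a unique such $V_{x_0}$. Set $d:=d(y,C\setminus V_{x_0})$; if $d\ge\rho$ then $\tau_{x_0}(y)=1$ and we are done. Otherwise, choose $z\in C\setminus V_{x_0}$ realizing this distance. The crucial point is that $z\in K_0$: because $V_{x_0}$ is open, $z$ lies on the frontier $\partial V_{x_0}\cap C$, and because every other thin component $V_{x_1}$ is an open set disjoint from $V_{x_0}$, one has $\partial V_{x_0}\cap V_{x_1}=\emptyset$, so $\partial V_{x_0}\cap C\subseteq K_0$. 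Maximality of $R$ now yields $x'\in R$ with $d(z,x')<2\rho$, whence $d(y,x')<d+2\rho<3\rho$, placing $y\in V_{x'}$ with $d(y,C\setminus V_{x'})>\rho-d$. Therefore $\tau_{x_0}(y)=d/\rho$ while $\tau_{x'}(y)>1-d/\rho$, and the elementary inequality $t^2+(1-t)^2\ge 1/2$ (attained at $t=1/2$) yields $\tau_{x_0}(y)^2+\tau_{x'}(y)^2>1/2$.

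The main (and essentially only) subtlety is justifying that the nearest point $z$ lands in $K_0$ rather than inside a neighboring thin piece; this rests entirely on the disjointness of distinct thin components recorded in \cref{submar}. Everything else is a triangle inequality combined with the parabola bound $t^2+(1-t)^2\ge 1/2$.
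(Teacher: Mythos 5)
Your proof is correct and follows essentially the same route as the paper's: handle $y\in K_0$ via the covering by balls $B_{2\rho}(x)$ with $x\in R$; in the thin case, show that the nearest point of $C\setminus V_{x_0}$ to $y$ lies in $K_0$ (using disjointness of the thin components), find a nearby $x'\in R$, and apply $t^2+(1-t)^2\ge 1/2$. You merely spell out the triangle-inequality bookkeeping and the topological justification that the nearest point lands in $K_0$, which the paper leaves terse.
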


\begin{proof}
This is clear for $y\in K_0$
since the balls $B_{2\rho}(x)$ with $x\in R$ already cover $K_0$.
It is also clear for $y\in V_x$, where $x\in P\cup Q$,
if $y$ has distance at least $\rho$ to $C\setminus V_x$, since then $\tau_x(y)=1$.
In the remaining case, if $y$ has distance at most $\rho$ to $C\setminus V_x$,
consider a shortest geodesic connection $\gamma$ from $y$ to $C\setminus V_x$.
The endpoint $y'$ of $\gamma$ belongs to $K_0$, by the definition of $K_0$
and since the various $V_z$ with $z\in P\cup Q$ are pairwise disjoint.
But then $y'\in B_{2\rho}(z)$ for some $z\in R$,
and hence $\tau_x(y)+\tau_z(y)\ge1$.
\end{proof}

Define functions $\psi_x$, $x\in S$, by
\begin{align}\label{psix}
	\psi_x = \frac{\tau_x}{\left(\sum_{z\in S}\tau_z^2\right)^{1/2}}.
\end{align}
Then the $\psi_x$ are a family of functions,
whose squares form a partition of unity on $C$.
They admit Lipschitz constant
\begin{align}\label{psixl}
	\Lip(m,\rho) =  2^{5/2}(\ell_3(m)+1)/\rho
\end{align}
by Lemmas \ref{lebesgue1} and \ref{taux} and the quotient rule for Lipschitz functions.

For any $x\in S$, let $\vf_x=\psi_x\circ\pi_C$.
Now the $V_x$ with $x\in R$ are contractible,
and therefore we are in the situation considered in \cite[Section 5]{BallmannPolymerakis22a}.
Hence we can conclude from  \cite[(5.7)]{BallmannPolymerakis22a} that,
for any non-zero $u\in C^\infty_c(M,E)$, there is an $x\in S$ such that $\vf_xu$ is non-zero and
\begin{align}\label{5dot7}
	\Ray_A(\vf_xu)
	\le \Ray_A(u) + \sum_{x\in S} \|\nabla\vf_x\|_{\supp u,\infty}^2.
\end{align}
This is a keystone of our discussion.

Denote by $r$ the distance to $K_0=\{r=0\}$ and recall that $\lambda_0=\lambda_0(A_H)$.
Our analogue of \cref{finvol} in the general case is as follows.

\begin{lem}\label{repsil}
For any $\ve>0$, there is an $r_\ve=r_\ve(m,a)\ge1$ such that
\begin{align*}
	\la Au,u\ra_2
	\ge (\lambda_0-\ve)\|u\|_2^2
\end{align*}
for all $u\in C^\infty_c(\{r\ge r_\ve\},E)$.
In particular, $\lambda_{\ess}(A)\ge\lambda_0$.
\end{lem}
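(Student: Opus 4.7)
The plan is to extend the proof of \cref{finvol} to the infinite-volume setting by combining the localization inequality \eqref{5dot7} with \cref{pp}. I would choose $r_\ve=r_\ve(m,a,\ve)\ge 1$ large enough that for every $u\in C^\infty_c(\{r\ge r_\ve\},E)$ the support of $u$ is contained in the union of two kinds of regions of $M$: the deep parts of the thin cells $V_p$ and $V_{[x,y]}$ with $x\in P\cup Q$, and the tubes $\pi_C^{-1}(V_x)$ with $x\in R$. On the first kind, $\pi_1(V_x)$ is amenable (virtually nilpotent for parabolic cusps, infinite cyclic for short-geodesic necks) and the universal cover $U_x\to V_x$ is an amenable Riemannian covering by an open subset of $H$; on the second kind, $\pi_C^{-1}(V_x)$ is simply connected (because $V_x=B_{3\rho}(x)$ is a contractible ball) and lifts isometrically to an open subset of $H$.

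Next I would apply \eqref{5dot7}, obtaining some $x\in S$ with $\vf_x u\neq 0$ and
\[
\Ray_A(\vf_x u)\le \Ray_A(u)+\sum_{z\in S}\|\nabla\vf_z\|_{\supp u,\infty}^2.
\]
If $x\in P\cup Q$, then \cref{pp} applied with tolerance $\ve/2$ to the amenable cover $U_x\to V_x$ produces a $u_H$ of equal $L^2$-norm with $\Ray_{A_H}(u_H)\le \Ray_A(\vf_x u)+\ve/2$; since $\Ray_{A_H}\ge \lambda_0$, this gives $\Ray_A(\vf_x u)\ge \lambda_0-\ve/2$. If $x\in R$, the isometric lift yields $\Ray_A(\vf_x u)\ge \lambda_0$ outright. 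Provided the gradient sum is also at most $\ve/2$, we conclude $\la Au,u\ra\ge (\lambda_0-\ve)\|u\|_2^2$. The second assertion $\lambda_\ess(A)\ge \lambda_0$ then follows by Persson's principle: for every $\ve>0$, any section with Rayleigh quotient below $\lambda_0-\ve$ must have support meeting the compact set $\{r\le r_\ve\}$, so the spectral projection of $A$ onto $(-\infty,\lambda_0-\ve]$ has finite-rank range.

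The main obstacle is making the gradient sum small. The Lipschitz constants of the $\psi_z$ built in \cref{secdir} are bounded by the fixed $\Lip(m,\rho)$ from \eqref{psixl}, independent of $r_\ve$, so the sum does not automatically decay with $r_\ve$. The fix is to arrange that $\pi_C(\supp u)$ lies in the interior of a single cell of $\mathcal V$, where one $\vf_z$ is locally equal to $1$ and the others vanish (by $\tau_p=1$ on the $\rho$-interior of $V_p$ and $\tau_z=0$ off $V_z$), so that the gradient sum actually vanishes on $\supp u$. This is immediate when $y\in C$, since there $\pi_C(y)=y$ and $r(y)$ measures depth directly in a thin cell; it requires extra care on funnel components of $M\setminus C$, where $\pi_C(y)\in\partial C$ may be close to $K_0$ even when $r(y)$ is large, and it is precisely for these funnel supports that the isometric-lift clause for $x\in R$ in the first paragraph is needed.
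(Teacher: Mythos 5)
Your overall architecture matches the paper's: reduce to the infinite-volume case, apply the localization inequality \eqref{5dot7}, use the isometric lift of $\pi_C^{-1}(V_x)$ for $x\in R$ and the amenable covering for $x\in P\cup Q$ via \cref{pp}, and close by choosing $r_\ve$ so the gradient error is $\le\ve$. You also correctly identify the central obstacle: the Lipschitz bound \eqref{psixl} on the $\psi_z$ is a fixed constant, so the error term $\sum_z\|\nabla\vf_z\|_{\supp u,\infty}^2$ does not decay with $r_\ve$ automatically.

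But your proposed fix does not work. You cannot ``arrange that $\pi_C(\supp u)$ lies in the interior of a single cell of $\mathcal V$'': the section $u$ is an arbitrary element of $C^\infty_c(\{r\ge r_\ve\},E)$, and $\pi_C(\supp u)$ can spread across many cells of $\mathcal V$ (indeed, on a funnel, $\pi_C(\supp u)$ can sweep across all of $K_0$). The case you flag as problematic — $y$ in a funnel with $\pi_C(y)\in\partial C$ close to $K_0$ — is exactly the case your argument leaves open, and invoking the isometric lift for $x\in R$ does not address it: that lift controls the Rayleigh quotient of the single $\vf_xu$ selected by \eqref{5dot7}, not the gradient sum appearing as the error term. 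As written, your argument proves nothing in the funnel case.

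The missing idea is geometric. Because $C$ is convex and $K\le-a^2<0$, the nearest-point projection $\pi_C$ has Lipschitz constant at most $1/\cosh(as)$ on the set $\{d_C\ge s\}$, where $d_C$ is distance to $C$. Since $\vf_z=\psi_z\circ\pi_C$, this gives $\|\nabla\vf_z\|\le\Lip(m,\rho)/\cosh(a\,d_C)$, which decays exponentially away from $C$. One then checks, for $y$ with $r(y)\ge r_\ve$: either $\pi_C(y)\in K_0$, so $d_C(y)=r(y)\ge r_\ve$; or $\pi_C(y)$ is within $3\rho$ of $K_0$ inside some thin cell $V_x$ with $x\in P\cup Q$, so $d_C(y)\ge r(y)-3\rho$; or $\pi_C(y)$ is at distance $\ge3\rho$ from $C\setminus V_x$ inside that cell, in which case $\psi_x\equiv1$ near $\pi_C(y)$ and the error term vanishes there. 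In the first two cases the error term is bounded by $2^5(\ell_3(m)+1)^3/\rho^2\cosh(a(r_\ve-3\rho))^2$, which defines $r_\ve(m,a)$ by setting this equal to $\ve$. This is what makes the gradient sum small for $u$ supported far from $K_0$, and it is the step your proposal lacks.

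One smaller point: in \cref{pp} the tolerance is a free parameter, so one gets $\Ray_A(\vf_xu)\ge\lambda_0$ (not merely $\ge\lambda_0-\ve/2$) for $x\in P\cup Q$; your budgeting of $\ve/2$ there is harmless but unnecessary.
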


We have to be a bit more careful than in \cite{BallmannPolymerakis22a}
since we need to estimate the error term in \eqref{5dot7},
that is, the Lipschitz constants of the $\vf_x$.

\begin{proof}
By \cref{finvol}, we may assume that $|M|=\infty$.
This is only for convenience, since the following discussion would be a little bit clumsier
if the finite volume case would be included.

Since $|M|=\infty$, we have $C\ne M$.
Since $C$ is convex, the projection $\pi_C$ onto $C$ has Lipschitz constant $\le1/\cosh(as)$
on the set $\{d_C\ge s\}$, where $d_C$ denotes the distance to $C$.
Therefore $\Lip\vf_x\le\Lip(m,\rho)/\cosh(as)$ on $\{d_C\ge s\}$.
By \cref{lebesgue1}, at any point of $M$,
at most $\ell_3(m)+1$ points contribute to the error term in \eqref{5dot7}.
Therefore the error term at any point of $\{d_C\ge s\}$ is bounded by
\begin{align}\label{error}
	2^5(\ell_3(m)+1)^3/\rho^2\cosh(as)^2.
\end{align}
Now there are the following three kinds of points $y$ in $\{d_C\ge s\}$: \\
1) If $\pi_C(y)\in K_0$, then $r(y)=d_C(y)$,
and we have \eqref{error} as an upper estimate for the error term. \\
2)  If $\pi_C(y)\in C\cap V_x$ for some $x\in P\cup Q$
and $\pi_C(y)$ is away from $C\setminus V_x$ by at most distance $3\rho$,
then $d_C(y)\ge r(y)-3\rho$. \\
3) If $\pi_C(y)\in C\cap V_x$ for some $x\in P\cup Q$
and $\pi_C(y)$ is away from $C\setminus V_x$ by at least distance $3\rho$,
then $\psi_z(\pi_C(y))=0$ for all $z\ne x$ in $S$ and $\psi_x=1$ in a neighborhood of $\pi_C(y)$.
But then the error term in \eqref{5dot7} vanishes.

Recall now that $\rho=\rho_0/2$ and let $r_\ve= r_\ve(m,a,b)>0$ be the solution of
\begin{align}\label{error2}
	\cosh(a(r_\ve-3\rho_0/2))^2= 2^5(\ell_3(m)+1)^3/\rho_0^2\ve.
\end{align}
We claim that $r_\ve$ satisfies the assertion of \cref{repsil}.
To that end, let $u\in C^\infty_c(\{r\ge r_\ve\},E)$ be non-zero.
If the asserted inequality would not hold for $u$,
then there would be an $x\in S$ such that $\Ray_A(\vf_xu)<\lambda_0$,
by \eqref{5dot7} and the definition of $r_\ve$.
But the support of $\vf_xu$ is contained in $V_x$,
hence $\vf_xu$ can be lifted to $H$ for $x\in R$,
and then $\Ray_A(\vf_xu)\ge\lambda_0$.
Or else the support of $\vf_xu$ belongs to $V_x$ for some $x\in P\cup Q$.
Now the fundamental group of $V_x$ is amenable and its universal covering is an open subset of $H$.
Then again $\Ray_A(\vf_xu)\ge\lambda_0$, by \cref{pp}.
Hence both cases, $x\in R$ or $x\in P\cup Q$, lead to a contradiction.
\end{proof}

Given $\ve>0$, let $r_\ve=0$ if $|M|<\infty$ and as in \cref{repsil} otherwise.
Suppose that $u\in C^\infty_c(M,E)$ has Rayleigh quotient $<\lambda_0-\ve$. 
Let
\begin{align}\label{sepsil}
	s_\ve = r_\ve+\pi/\sqrt{2\ve}
\end{align}
and $\vf$ be the function such that
\begin{align}\label{fepsil}
	\vf(x) =
	\begin{cases}
	1 &\text{on $\{r\le r_\ve\}$}, \\
	\cos(\sqrt\ve(r-r_\ve)/\sqrt2) &\text{on $\{r_\ve\le r\le s_\ve\}$}, \\
	0  &\text{on $\{r\ge s_\ve\}$}.
	\end{cases}
\end{align}

\begin{thm}\label{thmdir}
Let $\ve>0$, $0\le\lambda<\lambda_0-2\ve$,
and $W\subseteq C^\infty_c(M,E)$ be a subspace
such that $\Ray_A(u)\le\lambda$ for all non-zero $u\in W$.
Then multiplication with $\vf$ defines a linear injection of $W$ into $\Lip_c(\{r\le s_\ve\},E)$ such that
\begin{align*}
	\Ray_A(\vf u)\le\lambda+\ve<\lambda_0-\ve
\end{align*}
for all non-zero $u\in W$.
In particular, $A$ over $\{r\le s_\ve\}$
has at least $\dim W$ Dirichlet eigenvalues in $[0,\lambda+\ve]$.
\end{thm}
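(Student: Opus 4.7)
The plan is to combine the far-field lower bound from \cref{repsil} with an IMS-style localization using the cosine cutoff $\vf$ and its sine complement. This will simultaneously establish the injectivity of multiplication by $\vf$ on $W$ and the Rayleigh quotient bound $\Ray_A(\vf u) \le \lambda + \ve$; the final statement about Dirichlet eigenvalues will then follow at once from the min-max principle.

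First I would handle injectivity. Since $\vf$ is strictly positive on $\{r < s_\ve\}$ and vanishes on $\{r \ge s_\ve\}$, the equality $\vf u = 0$ for a non-zero $u \in W$ forces $u$ to be compactly supported in $\{r \ge s_\ve\} \subseteq \{r \ge r_\ve\}$. But then \cref{repsil} would give $\Ray_A(u) \ge \lambda_0 - \ve > \lambda$, contradicting the defining property of $W$. Hence $\vf u$ is a non-zero element of $\Lip_c(\{r \le s_\ve\}, E)$ for every non-zero $u \in W$, and multiplication by $\vf$ is injective.

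The core of the argument is the Rayleigh quotient estimate. I would introduce the complementary cutoff $\vf'$, equal to $\sin(\sqrt\ve(r-r_\ve)/\sqrt 2)$ on $\{r_\ve \le r \le s_\ve\}$, to $0$ on $\{r \le r_\ve\}$, and to $1$ on $\{r \ge s_\ve\}$, so that $\vf^2 + \vf'^2 \equiv 1$ and, since $r$ is $1$-Lipschitz, $|d\vf|^2 + |d\vf'|^2 \le \ve/2$ everywhere. The IMS localization identity (whose cross term vanishes because $d(\vf^2+\vf'^2)=0$) then reads
\begin{align*}
	q(\vf u) + q(\vf' u) = q(u) + \int (|d\vf|^2 + |d\vf'|^2) |u|^2,
\end{align*}
where $q$ denotes the quadratic form of $A$. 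Since $\vf' u$ has support in $\{r \ge r_\ve\}$, extending \cref{repsil} from $C^\infty_c$ to $\Lip_c$ by density in the form domain yields $q(\vf' u) \ge (\lambda_0 - \ve)\|\vf' u\|_2^2$. Combining this with $q(u) \le \lambda \|u\|_2^2$, the partition $\|u\|_2^2 = \|\vf u\|_2^2 + \|\vf' u\|_2^2$, and the hypothesis $\lambda < \lambda_0 - 2\ve$, a direct rearrangement produces $\Ray_A(\vf u) \le \lambda + \ve/2 \le \lambda + \ve$.

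The final assertion is then immediate from the min-max characterization of Dirichlet eigenvalues applied to the $\dim W$-dimensional subspace $\vf W \subseteq \Lip_c(\{r \le s_\ve\}, E)$. The main delicate point is the balance of constants: the cosine profile is precisely what makes the bound $|d\vf|^2 + |d\vf'|^2 \le \ve/2$ optimal on an interval of length $\pi/\sqrt{2\ve}$, and the hypothesis $\lambda < \lambda_0 - 2\ve$ is exactly what is needed for the unfavorable term $(\lambda + 3\ve/2 - \lambda_0)\|\vf' u\|_2^2$ arising in the rearrangement to be non-positive and hence harmlessly discardable.
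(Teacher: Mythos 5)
Your proof is correct and follows essentially the same route as the paper's: your cutoff pair $(\vf,\vf')$ with $\vf'=\sqrt{1-\vf^2}$ is precisely the paper's $(\vf,\psi)$, the far-field bound from \cref{repsil} plays the same role, and your explicit IMS identity plus rearrangement simply replaces the paper's appeal to the packaged localization inequality \eqref{5dot7}. Your pointwise bound $|d\vf|^2+|d\vf'|^2\le\ve/2$ is marginally sharper than the sup-norm bound $\|\nabla\vf\|_\infty^2+\|\nabla\psi\|_\infty^2\le\ve$ the paper uses, but either suffices for the stated conclusion.
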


\begin{proof}
Set $\psi=\sqrt{1-\vf^2}$.
Then the pair $(\vf^2,\psi^2)$ is a partition of unity on $M$.
By \cref{finvol} respectively \cref{repsil}, we have
\begin{align*}
	\la A(\psi u),\psi u\ra_2 \ge (\lambda_0-\ve) \|\psi u\|_2^2
\end{align*}
for any $u\in W$.
Therefore we must have, by \eqref{5dot7}, that $\vf u$ is non-zero with
\begin{align*}
	\Ray_A(\vf u)
	\le \Ray_A(u) + \|\nabla\vf\|_\infty^2 + \|\nabla\psi\|_\infty^2
	\le \lambda + \ve
	< \lambda_0 - \ve.
\end{align*}
Hence multiplication by $\vf$ is an injective linear map from $W$ to a subspace of
Lipschitz sections with compact support of $E$ contained in $\{r\le s_\ve\}$
and with Rayleigh quotient $\le\lambda+\ve$.
\end{proof}

Let $0<2\ve<\lambda_0$, and enumerate the eigenvalues of $A$ below $\lambda_0-2\ve$ as 
\begin{align*}
  0\le\lambda_0(A,M)\le\dots\le\lambda_k(A,M)<\lambda_0-2\ve.
\end{align*}
Similarly, enumerate the Dirichlet eigenvalues of $A$ over $\{r\le s_\ve\}$ as
\begin{align*}
  0\le\lambda_0^D(A,\{r\le s_\ve\})\le\lambda_1^D(A,\{r\le s_\ve\})\le\dots
\end{align*}
where, in each case, multiplicities are taken into account.

\begin{cor}\label{cordir}
For all $0<2\ve<\lambda_0$ and $1\le i\le k$, we have
\begin{align*}
  \lambda_i(A,M) \ge \lambda_i^D(A,\{r\le s_\ve\})-\ve.
\end{align*}
\end{cor}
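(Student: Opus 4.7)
The plan is to combine \cref{thmdir} with the min--max principle. Since $\lambda_{\ess}(A)\ge\lambda_0$ by \cref{repsil}, the values $\lambda_0(A,M),\dots,\lambda_k(A,M)$ are genuine discrete eigenvalues of finite multiplicity, and the same holds for the Dirichlet eigenvalues $\lambda_0^D(A,\{r\le s_\ve\}),\lambda_1^D(A,\{r\le s_\ve\}),\dots$ below the Dirichlet essential spectrum. Fix $i\in\{1,\dots,k\}$ and choose $\delta>0$ small enough that $\lambda_i(A,M)+\delta<\lambda_0-2\ve$.

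First I would manufacture an $(i+1)$-dimensional subspace $W\subseteq C^\infty_c(M,E)$ on which the Rayleigh quotient is at most $\lambda_i(A,M)+\delta$. By the variational characterization of the discrete eigenvalues below the essential spectrum, the span $W_0$ of $L^2$-orthonormal eigensections for $\lambda_0(A,M),\dots,\lambda_i(A,M)$ satisfies $\Ray_A(u)\le\lambda_i(A,M)$ for all non-zero $u\in W_0$. Since $A$ is essentially self-adjoint with $V\ge0$, $C^\infty_c(M,E)$ is a form core, so each element of the finite-dimensional space $W_0$ can be approximated in the form norm by a smooth compactly supported section. A standard perturbation argument (applied simultaneously to an orthonormal basis of $W_0$) then yields an $(i+1)$-dimensional $W\subseteq C^\infty_c(M,E)$ with
\begin{align*}
	\sup_{0\ne u\in W}\Ray_A(u)\le\lambda_i(A,M)+\delta.
\end{align*}

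Setting $\lambda=\lambda_i(A,M)+\delta$, one has $\lambda<\lambda_0-2\ve$, so \cref{thmdir} applies to $W$. It produces a linear injection $W\hookrightarrow\Lip_c(\{r\le s_\ve\},E)$ via multiplication by $\vf$, whose image is an $(i+1)$-dimensional space of admissible test sections for the Dirichlet problem of $A$ over $\{r\le s_\ve\}$ with Rayleigh quotient at most $\lambda+\ve$. The min--max principle for the Dirichlet realization of $A$ on $\{r\le s_\ve\}$ then forces
\begin{align*}
	\lambda_i^D(A,\{r\le s_\ve\})\le\lambda+\ve=\lambda_i(A,M)+\delta+\ve.
\end{align*}
Letting $\delta\to 0$ gives the claim.

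The main obstacle I anticipate is the transition from abstract $L^2$-eigensections to a subspace actually sitting inside $C^\infty_c(M,E)$ on which the Rayleigh quotient is only slightly larger than $\lambda_i(A,M)$; this is a standard density and perturbation argument, but one must perform it simultaneously on the finite-dimensional space $W_0$ so that linear independence is preserved in $W$. Once $W$ is in hand, the application of \cref{thmdir} and of the Dirichlet min--max are routine.
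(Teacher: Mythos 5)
Your proof is correct and follows the same route the paper intends: build an $(i{+}1)$-dimensional test space from the eigensections of $A$ on $M$, push it through \cref{thmdir} into Dirichlet test sections over $\{r\le s_\ve\}$, and conclude by the min--max principle. The paper leaves the passage from $L^2$-eigensections to a subspace of $C^\infty_c(M,E)$ implicit (treating the corollary as an immediate consequence of \cref{thmdir}), whereas you spell out the standard density-and-perturbation step with the auxiliary $\delta$; this is the only point at which you say more than the paper, and it is a legitimate technical detail rather than a different approach.
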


In the case of the Laplacian $\Delta_0$ on functions,
we use the shorthand $\lambda_i(M)$ instead of $\lambda_i(\Delta_0,M)$,
and similarly for the Dirichlet eigenvalues of  $\{r\le s_\ve\}$.

\begin{cor}\label{hamfin}
If the volume of $M$ is finite and $m\ge3$, then
\begin{align*}
	\lambda_i^D(\{r\le s_\ve\})
	\le \big(1+\frac{\ve|M|^2}{C(m)a^2}\big)\lambda_i(M)
\end{align*}
for all $1\le i\le k$.
\end{cor}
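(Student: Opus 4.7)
The plan is to combine the additive estimate from \cref{cordir} with a Cheeger-type lower bound for $\lambda_1(M)$, which is available precisely because $m\ge 3$.

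Applied to $A=\Delta_0$, \cref{cordir} already gives
\[
\lambda_i^D(\{r\le s_\ve\}) \le \lambda_i(M) + \ve \quad\text{for } 1\le i\le k.
\]
Turning this additive estimate into the claimed multiplicative one reduces to showing that, under the hypotheses of \cref{hamfin},
\[
\lambda_1(M) \ge \frac{C(m)a^2}{\vol(M)^2}.
\]
Granting this inequality, one has $\lambda_i(M)\ge\lambda_1(M)\ge C(m)a^2/\vol(M)^2$ for $i\ge1$, and hence $\ve\le(\ve\vol(M)^2/(C(m)a^2))\,\lambda_i(M)$; adding this to $\lambda_i(M)$ produces the desired bound.

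The main (and only real) obstacle is therefore the lower bound $\lambda_1(M)\ge C(m)a^2/\vol(M)^2$ itself. The natural approach is via Cheeger's inequality $\lambda_1(M)\ge h(M)^2/4$ together with an isoperimetric inequality of the form $h(M)\ge C(m)a/\vol(M)$ for the Cheeger constant of $M$. The curvature upper bound $K\le-a^2$ provides strong isoperimetric control at small and moderate scales, and in dimension $m\ge3$ the cross-section of a Margulis tube around a short closed geodesic has positive $(m-1)$-dimensional volume, which prevents the formation of narrow bottlenecks that would drive $\lambda_1(M)$ to zero. The exclusion of $m=2$ in \cref{hamfin} reflects exactly the fact that for surfaces a short separating geodesic can make $\lambda_1(M)$ arbitrarily small independently of $\vol(M)$, so that no universal bound of the form $C(m)a^2/\vol(M)^2$ can hold.
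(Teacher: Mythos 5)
Your reduction is exactly the paper's: apply \cref{cordir} to get $\lambda_i^D(\{r\le s_\ve\})\le\lambda_i(M)+\ve$, then convert the additive error into a multiplicative one via a lower bound $\lambda_1(M)\ge C(m)a^2/|M|^2$ together with $\lambda_i(M)\ge\lambda_1(M)$ for $i\ge1$. You have also correctly identified why $m=2$ must be excluded (short separating geodesics on surfaces make $\lambda_1$ arbitrarily small independently of area), which is indeed the geometric reason behind the dimension restriction.

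The gap is that the crucial inequality $\lambda_1(M)\ge C(m)a^2/|M|^2$ is only asserted, not established. You propose proving it via Cheeger's inequality plus an isoperimetric estimate $h(M)\ge C(m)a/\vol(M)$, and then offer a heuristic discussion of why such an estimate should hold (curvature upper bound at moderate scales, positive $(m-1)$-volume of tube cross-sections preventing bottlenecks). That is a plausible narrative, but it is not a proof of the isoperimetric bound; carrying it out requires a careful analysis of the thick--thin decomposition that is a theorem in its own right. The paper simply cites this fact as Dodziuk's theorem \cite[Theorem]{Dodziuk87}, which gives precisely $\lambda_1(M)\ge C(m)a^2/|M|^2$ for complete finite-volume manifolds with $-1\le K\le-a^2$ and $m\ge3$. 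Once that citation (or a full proof of the isoperimetric estimate) is supplied, your argument coincides with the paper's.
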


\begin{proof}
Since $m\ge3$ and the volume of $M$ is finite,
the bottom of the positive part of the spectrum of $M$ satisfies
\begin{align*}
  \lambda_1(M) \ge C(m)a^2/|M|^2,
\end{align*}
by Dodziuk's \cite[Theorem]{Dodziuk87}.
Therefore, if $i\ge1$, then
\begin{align*}
	\lambda_i^D(\{r\le s_\ve\})
	\le \lambda_i(M)+\ve
	\le \big(1+\frac{\ve|M|^2}{C(m)a^2}\big)\lambda_i(M)
\end{align*}
where we use \cref{cordir} and that $\lambda_i(M)\ge\lambda_1(M)$.
\end{proof}

\cref{hamfin} is a version of Hamenst\"adt's \cite[Theorem 1]{Hamenstaedt19},
where she obtains that $\lambda_i(M)\ge\lambda_i^N(\T_\rho(M))/3$
for all eigenvalues $\lambda_i(M)\le(m-2)^2a^2/12$,
where $\rho>0$ is sufficiently small and the superindex $N$ indicates Neumann eigenvalues.
(She also assumes that $M$ is orientable.)

\begin{cor}\label{haminf}
If the volume of $M$ is infinite and $m\ge3$, then
\begin{align*}
	\lambda_i^D(\{r\le s_\ve\})
	\le \big(1+\frac{\ve|U_1(M)|^2}{C(m,a)}\big)\lambda_i(M)
\end{align*}
for all $0\le i\le k$.
\end{cor}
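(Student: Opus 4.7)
The plan is to mimic the proof of \cref{hamfin}, replacing Dodziuk's theorem by an analogous lower bound for $\lambda_0(M)$ valid in the infinite volume setting.

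From \cref{cordir} we have $\lambda_i^D(\{r\le s_\ve\}) \le \lambda_i(M)+\ve$ for $1\le i\le k$. For $i=0$ the same inequality follows by applying \cref{thmdir} to the one-dimensional subspace spanned by a normalized section whose Rayleigh quotient is arbitrarily close to $\lambda_0(M)$; this guarantees at least one Dirichlet eigenvalue of $A$ over $\{r\le s_\ve\}$ bounded above by $\lambda_0(M)+\ve$.

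The new ingredient required is a lower bound of the form
\begin{align*}
	\lambda_0(M) \ge \frac{C(m,a)}{|U_1(C)|^2}
\end{align*}
for every geometrically finite manifold $M$ of infinite volume and pinched sectional curvature in $[-1,-a^2]$. Granting this, and using $\lambda_i(M)\ge\lambda_0(M)$ for every $0\le i\le k$, the chain of inequalities
\begin{align*}
	\lambda_i^D(\{r\le s_\ve\}) \le \lambda_i(M)+\ve \le \lambda_i(M)\Big(1+\frac{\ve}{\lambda_0(M)}\Big) \le \Big(1+\frac{\ve|U_1(C)|^2}{C(m,a)}\Big)\lambda_i(M)
\end{align*}
yields the claim, exactly as in \cref{hamfin}.

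The main obstacle is to establish the displayed lower bound on $\lambda_0(M)$. The natural strategy is to combine \cref{repsil}, which shows that any compactly supported section concentrated in $\{r\ge r_\ve\}$ has Rayleigh quotient at least $\lambda_0-\ve>0$, with a Dodziuk-type estimate applied to the bounded domain $U_{r_\ve}(C)$. Concretely, given $u\in C^\infty_c(M)$ one decomposes $u$ via a smooth radial cutoff into an inner part supported in $U_{r_\ve+1}(C)$ and an outer part supported in $\{r\ge r_\ve\}$; the IMS-type identity then reduces the problem to a Poincaré-type estimate on $U_{r_\ve+1}(C)$. The delicate step is to express the resulting constant purely in terms of $|U_1(C)|$, $m$, and $a$: since $r_\ve=r_\ve(m,a)$ is determined a priori by $m$ and $a$, Bishop-Gromov volume comparison on tubes around the convex core $C$ bounds $|U_{r_\ve+1}(C)|$ by a constant multiple of $|U_1(C)|$ depending only on $m$ and $a$, allowing the Dodziuk-type bound to be invoked with the required form of the constant.
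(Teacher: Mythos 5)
Your plan matches the paper's: the corollary follows from the bound $\lambda_i^D(\{r\le s_\ve\}) \le \lambda_i(M)+\ve$ together with a lower bound $\lambda_0(M) \ge C(m,a)/|U_1(C)|^2$, via exactly the chain of inequalities you write. You also correctly note that the case $i=0$ requires invoking \cref{thmdir} directly rather than \cref{cordir}, a detail the paper's ``follows now as in the previous case'' silently skips, since \cref{cordir} and \cref{hamfin} are both stated only for $i\ge1$.

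The divergence is in how the lower bound on $\lambda_0(M)$ is obtained, and this is where your proposal has a gap. The paper does not derive it: it cites Hamenst\"adt's \cite[Theorem.1]{Hamenstaedt04} verbatim, which is the exact infinite-volume analogue of the Dodziuk estimate used in \cref{hamfin}. Your sketch of a direct proof does not close. After the IMS decomposition against $\{r\ge r_\ve\}$, the inner piece is compactly supported in a bounded neighborhood of $K_0$, so what you need is a lower bound on a first \emph{Dirichlet} eigenvalue of that domain in terms of its volume. Dodziuk's theorem controls the first \emph{positive Neumann-type} eigenvalue of a complete finite-volume manifold; it does not transfer to Dirichlet eigenvalues of a bounded domain in an infinite-volume $M$ without substantial extra work, and carrying that out would essentially reproduce Hamenst\"adt's proof. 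The clean route is the citation. A small notational point: the paper's statement writes $|U_1(M)|$, but since $|M|=\infty$ here this must be a slip for the convex-core neighborhood; your $|U_1(C)|$ is the intended quantity.
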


\begin{proof}
By Hamenst\"adt's \cite[Theorem.1]{Hamenstaedt04}, we have
\begin{align*}
  \lambda_0(M) \ge C(m,a)/|U_1(M)|^2
\end{align*}
for the smallest eigenvalue $\lambda_0(M)$ of $M$.
The assertion follows now as in the previous case.
\end{proof}

\section{Counting small eigenvalues}
\label{seccou}

Recall that we normalized $A=\Delta+V$ so that $V\ge0$
and that we use the shorthand $\lambda_0=\lambda_0(A_H)$.
Recall also that $r$ denotes the distance to the $\rho_0$-thick part $K_0$
of the convex core $C$ as in \eqref{cthick}.

In this section, we estimate the number of small eigenvalues under

\begin{asu}\label{asu}
There is a $0<\rho_2=\rho_2(m,a,|\theta_*|)\le\rho_0/2$ such that, for all $x\in H$ and $0<\rho\le\rho_2$,
there is an open neighborhood $U_\rho(x)$ with
\begin{align*}
	B_{\rho}(x) \subseteq U_\rho(x) \subseteq B_{3\rho/2}(x)
\end{align*}
such that the $(\rk E+1)^{\rm st}$ Neumann eigenvalue of $\Delta=\nabla^*\nabla$ on $U_\rho(x)$
is at least $\ell_3(m)\lambda_0$.
\end{asu}

In \cref{secneu}, we will provide conditions under which \cref{asu} holds.

\begin{lem} \label{inrest}
For any $x\in M$, $\inj(x) \ge e^{-r(x)}\rho_0/2$.
\end{lem}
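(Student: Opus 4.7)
The plan is to reduce the bound to an exponential growth estimate on the displacement function $d_g := d(\cdot,g\cdot)$. Fix a lift $\tilde x \in H$ of $x$. If $\inj(x) \ge \rho_0/2$ the inequality is immediate, so I assume $2\inj(x) < \rho_0$ and pick $g \in \Gamma \setminus \{e\}$ with $d_g(\tilde x) = 2\inj(x)$. Since $K_0$ is compact, there is $y \in K_0$ with $d(x,y) = r(x)$; lifting a minimizing geodesic from $x$ to $y$ starting at $\tilde x$ produces $\tilde y \in H$ projecting into $K_0$ with $d(\tilde x, \tilde y) = r(x)$. By the definition $K_0 = C \setminus \T_{\rho_0}(M)$, no non-trivial element of $\Gamma$ displaces $\tilde y$ by less than $\rho_0$, so in particular $d_g(\tilde y) \ge \rho_0$.

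The crux is the displacement growth estimate
\[
    d_g(\tilde z) \le e^{d(\tilde x, \tilde z)}\, d_g(\tilde x) \qquad \text{for all } \tilde z \in H,
\]
which, applied at $\tilde z = \tilde y$, yields $\rho_0 \le e^{r(x)} \cdot 2\inj(x)$ and hence the claim.

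To prove the estimate I would split on the type of $g$. Since $d_g(\tilde x) < \rho_0$, the Margulis lemma (recalled in \cref{submar}) forces $g$ to be either hyperbolic or parabolic. For hyperbolic $g$ with axis $A$ and translation length $\ell$, the constant-curvature-$-1$ identity $\sinh(d_g(\cdot)/2) = \sinh(\ell/2)\cosh(d(\cdot,A))$, the triangle inequality $d(\tilde z, A) \le d(\tilde x, A) + d(\tilde x, \tilde z)$, the elementary bound $\cosh(a+b) \le \cosh(a)\,e^b$, and the implication ``$\sinh u \le C\sinh v$ with $C \ge 1$ implies $u \le Cv$'' (from monotonicity of $\sinh u/u$) chain together to give the estimate. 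In pinched variable curvature the same chain closes once the equality is replaced by the corresponding Rauch comparison inequality $\sinh(d_g(\tilde z)/2) \le \sinh(\ell/2)\cosh(d(\tilde z, A))$. For parabolic $g$ fixing $p \in \partial H$, one replaces $d(\cdot, A)$ by the Busemann function $b_p$ and uses $|b_p(\tilde z) - b_p(\tilde x)| \le d(\tilde x, \tilde z)$, together with the analogous constant-curvature identity $\sinh(d_g(\tilde z)/2) = \sinh(d_g(\tilde x)/2)\, e^{b_p(\tilde x) - b_p(\tilde z)}$; alternatively one approximates $g$ by hyperbolic elements whose axes are pushed to $p$ and passes to the limit.

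The step I expect to be the main obstacle is checking the direction of the Rauch comparison carefully: it is the lower curvature bound $K \ge -1$ that drives the $e^r$ growth rate, since it prevents geodesics from diverging faster than in constant curvature $-1$. The upper bound $K \le -a^2$ plays no role here, consistently with the exponent $1$ rather than $a$ appearing in the statement.
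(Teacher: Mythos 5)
Your overall strategy is close in spirit to the paper's: both rest on the fact that, under the lower curvature bound $K\ge-1$, displacement functions (equivalently, injectivity radii) change at most exponentially with rate $1$. You streamline the bookkeeping by avoiding the paper's preliminary reduction via the nearest-point projection $\pi_C$ onto the convex core — the paper first uses that $\pi_C$ is $1$-Lipschitz and does not increase $r$, so that one may assume $x\in C\setminus K_0$ and then measure the distance to $\partial V$ of the containing thin component. Your version instead compares displacement at $\tilde x$ directly with displacement at a lift $\tilde y$ of a nearest point in $K_0$; that is a perfectly sound simplification and shows that the $\pi_C$ step is dispensable.

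The gap is in your justification of the growth estimate itself. For a hyperbolic $g$ with axis $A$ and translation length $\ell$, the Toponogov/Rauch comparison with the model $K=-1$ gives the one-sided inequality
$\sinh\bigl(d_g(\cdot)/2\bigr)\le\sinh(\ell/2)\cosh\bigl(d(\cdot,A)\bigr)$
at \emph{every} point, including $\tilde x$. Your chain needs the reverse inequality at the base point: after the steps
$\sinh(d_g(\tilde z)/2)\le\sinh(\ell/2)\cosh(d(\tilde z,A))\le e^{d(\tilde x,\tilde z)}\sinh(\ell/2)\cosh(d(\tilde x,A))$,
you must convert $\sinh(\ell/2)\cosh(d(\tilde x,A))$ back into $\sinh(d_g(\tilde x)/2)$, which requires $\sinh(\ell/2)\cosh(d(\tilde x,A))\le\sinh(d_g(\tilde x)/2)$. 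The lower curvature bound does \emph{not} give this; it gives the opposite. The upper bound $K\le-a^2$ gives a lower estimate for $d_g(\tilde x)$, but only in the $K=-a^2$ model (with $a$'s inside the $\sinh$ and $\cosh$), which does not close the chain with the clean exponent $e^{d(\tilde x,\tilde z)}$. So "replace the equality by the Rauch inequality" does not work as stated; the same problem recurs in your parabolic/Busemann variant. The correct route is to bound the gradient of $\ln\sinh(d_g/2)$ directly, via the first variation formula $|\nabla d_g|=2\sin(\alpha/2)$, where $\alpha$ is the exterior angle of the isoceles triangle $g^{-1}\tilde x,\tilde x,g\tilde x$, and then control $\alpha$ by comparison; that is what lies behind the paper's unproved assertion $2\inj(x)\ge e^{-d(x,\partial V)}\rho_0$ and is standard material from \cite{BallmannGromovSchroeder85}. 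Your instinct that $K\ge-1$ is what drives the exponent $1$ is right, but the axis/Busemann formulas alone, comparison-ified, do not deliver it.
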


\begin{proof}
Since $C$ is convex, the projection $\pi_C$ onto $C$ has Lipschitz constant one.
Hence $\inj(x)\ge\inj(\pi_C(x))$, for any $x\in M$.

On the other hand, since $\angle_{\pi_C(x)}(x,y)\ge\pi/2$ for any $y\in K_0$,
we have $r(x)\ge r(\pi_C(x))$.
Hence, by the first step, it suffices to estimate $\inj(z)$ for any $z\in C\setminus K_0\subseteq C$.
But then $z$ belongs to the $\rho_0$-thin part of $C$.
More precisely, there is a parabolic point $p\in H_\iota$ such that $x\in V_p$
or a geodesic $[x,y]$ in $H$, which is shifted by an $h\in\Gamma$ by less than $\rho_0$,
such that $x\in V_{[x,y]}$.
Then $\partial U\cap K_0\ne\emptyset$, where $U=V_p$ or $U=V_{[x,y]}$, respectively,
and hence
\begin{align*}
	2\inj(x) \ge e^{-d(x,\partial V)}\rho_0
	\ge e^{-d(x,K_0)}\rho_0
	= e^{-r(x)}\rho_0
\end{align*}
as asserted.
\end{proof}

Let $\omega(m,a,r)$ be the volume of the ball of radius $r$ in the real hyperbolic space
of dimension $m$ and constant sectional curvature $-a^2$.
Recall that $\omega(m,a,r)$ grows like $\exp((m-1)ar)$ as $r$ tends to infinity.

\begin{lem}\label{numest}
For any $s\ge1$ and $0<\rho\le e^{-s}\rho_0/2$,
the cardinality of any $2\rho$-separated set $Z$ in $\{r\le s\}$ is bounded by
\begin{align*}
	\frac{|\{r\le s+1\}|}{\omega(m,a,\rho)}.
\end{align*}
\end{lem}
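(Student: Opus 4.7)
The plan is to argue by disjointness of $\rho$-balls about the points of $Z$ together with a Günther-type lower volume comparison, using Lemma \ref{inrest} to guarantee that each such ball lies inside the injectivity radius of its centre.

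First, I would fix $z\in Z$. Since $r(z)\le s$, Lemma \ref{inrest} gives
\begin{align*}
\inj(z)\ge e^{-r(z)}\rho_0/2\ge e^{-s}\rho_0/2\ge\rho.
\end{align*}
Consequently the ball $B_\rho(z)\subseteq M$ lifts diffeomorphically via the exponential map to the ball of radius $\rho$ in $T_zM$. Because the sectional curvature of $M$ satisfies $K\le -a^2$, Günther's volume comparison theorem (valid up to the injectivity radius) yields
\begin{align*}
\vol\bigl(B_\rho(z)\bigr)\ge\omega(m,a,\rho).
\end{align*}

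Next, the $2\rho$-separation of $Z$ makes the balls $B_\rho(z)$, $z\in Z$, pairwise disjoint. Moreover, since $\rho\le e^{-s}\rho_0/2\le\rho_0/2<1$ (using that $10\rho_0<\ln 2$) and $r$ is $1$-Lipschitz (as a distance function from the subset $K_0$), any $y\in B_\rho(z)$ satisfies $r(y)\le r(z)+\rho\le s+1$; hence $B_\rho(z)\subseteq\{r\le s+1\}$.

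Combining these two facts by summing volumes over $z\in Z$,
\begin{align*}
|Z|\,\omega(m,a,\rho)\le\sum_{z\in Z}\vol\bigl(B_\rho(z)\bigr)=\vol\Bigl(\bigsqcup_{z\in Z}B_\rho(z)\Bigr)\le\vol\bigl(\{r\le s+1\}\bigr),
\end{align*}
which is the claimed inequality. The only point requiring care is the applicability of the lower volume comparison, which is precisely why the hypothesis $\rho\le e^{-s}\rho_0/2$ is imposed; once Lemma \ref{inrest} is in hand, this is immediate, so no serious obstacle arises.
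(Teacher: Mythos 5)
Your proof is correct and follows essentially the same strategy as the paper's: invoke Lemma \ref{inrest} to ensure $\inj(z)\ge\rho$, apply the Bishop--G\"unther lower volume comparison (using $K\le-a^2$) to each ball $B_\rho(z)$, observe that $2\rho$-separation makes these balls disjoint and that $r$ being $1$-Lipschitz together with $\rho\le1$ places them inside $\{r\le s+1\}$, then sum volumes. The only cosmetic difference is in how you verify $\rho\le1$ (you use $\rho_0<1$, whereas the paper uses the crude bound $\rho_0\le2e^s$), which is immaterial.
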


\begin{proof}
For any $2\rho$-separated set $Z$ in $\{r\le s\}$,
the balls of radius $\rho$ about the points of $Z$ are pairwise disjoint and contained in $\{r\le s+1\}$,
where we use the crude estimate $\rho_0\le2e^{s}$ for the latter assertion.
Moreover, since $\inj(z)\ge e^{-s}\rho_0/2\ge\rho$ for any $z\in Z$,
the volumes of these balls is at least $\omega(m,a,\rho)$.
\end{proof}

\begin{thm}\label{thmnum}
For any $\ve>0$, $A$ over $\{r\le s_\ve\}$ has at most
\begin{align*}
	\frac{|\{r\le s_\ve+1\}|}{\omega(m,a,e^{-s_\ve}\rho_2/3)}\rk E
\end{align*}
Dirichlet eigenvalues in $[-\infty,\lambda_0)$.
\end{thm}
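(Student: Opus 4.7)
The plan is to exploit \cref{asu} by covering $\{r \le s_\ve\}$ with a controlled family of sets $U_\bullet(z)$ on which $\Delta$ has a Neumann spectral gap well above $\lambda_0$, and then to embed the span of all Dirichlet eigensections of $A$ below $\lambda_0$ into the direct sum of the first $\rk E$ Neumann eigenspaces over those sets. The size of the cover is controlled by \cref{numest} and produces exactly the constant in the statement.

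Concretely, I set $\rho := e^{-s_\ve}\rho_2/3$. Because $\rho_2 \le \rho_0/2$, this choice gives $2\rho \le \rho_2$ (so that \cref{asu} applies at scale $2\rho$), $\rho \le e^{-s_\ve}\rho_0/2 \le \inj(x)$ on $\{r \le s_\ve\}$ via \cref{inrest}, and $7\rho < \ln 2$ using $10\rho_0 < \ln 2$. I take $Z$ to be a maximal $2\rho$-separated subset of $\{r \le s_\ve\}$. Then \cref{lebesgue} (with $s=3$) shows that the balls $B_{2\rho}(z)$ cover $\{r \le s_\ve\}$ while each point of $M$ lies in at most $\ell_3(m)$ of the $B_{3\rho}(z)$, and \cref{numest} bounds $|Z|$ by $|\{r \le s_\ve+1\}|/\omega(m,a,\rho)$, which is precisely the constant appearing in the theorem. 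Applying \cref{asu} at each $z \in Z$ with parameter $2\rho$ replaces these balls by neighborhoods $U_{2\rho}(z)$ sandwiched between $B_{2\rho}(z)$ and $B_{3\rho}(z)$, so both the covering property and the multiplicity bound are preserved, while on each $U_{2\rho}(z)$ the $(\rk E+1)^{\rm st}$ Neumann eigenvalue of $\Delta$ is at least $\ell_3(m)\lambda_0$.

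Let $W \subseteq L^2(\{r \le s_\ve\}, E)$ be the span of all Dirichlet eigensections of $A$ with eigenvalue strictly less than $\lambda_0$, and for each $z \in Z$ let $F_z$ be the span of the first $\rk E$ Neumann eigensections of $\Delta$ on $U_{2\rho}(z)$. The candidate injection will be
\[
T \colon W \to \bigoplus_{z \in Z} F_z, \qquad u \mapsto \bigl(P_z(u|_{U_{2\rho}(z)})\bigr)_{z \in Z},
\]
with $P_z$ the $L^2$-orthogonal projection onto $F_z$. Once $T$ is injective, the theorem follows from $\dim W \le |Z|\,\rk E$ combined with the bound on $|Z|$.

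The non-routine step will be verifying injectivity. For $u \in \ker T$ each $u|_{U_{2\rho}(z)}$ is $L^2$-orthogonal to $F_z$, so the min-max principle applied to the Neumann form of $\Delta$ on $U_{2\rho}(z)$, together with $V \ge 0$, yields $\la Au,u\ra_{U_{2\rho}(z)} \ge \ell_3(m)\lambda_0\,\|u\|_{U_{2\rho}(z)}^2$ for every $z$. Summing over $z$, the covering of $\supp u$ forces $\sum_z \|u\|_{U_{2\rho}(z)}^2 \ge \|u\|_2^2$, while the multiplicity bound gives $\sum_z \la Au,u\ra_{U_{2\rho}(z)} \le \ell_3(m)\la Au,u\ra_2$; the two factors of $\ell_3(m)$ cancel and deliver $\Ray_A(u) \ge \lambda_0$, contradicting $u \in W \setminus \{0\}$. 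This precise balance between the Neumann gap $\ell_3(m)\lambda_0$ demanded by \cref{asu} and the covering multiplicity supplied by \cref{lebesgue} is exactly why the constants in \cref{asu} are chosen as they are.
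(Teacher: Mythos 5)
Your proof is correct and is essentially the paper's own argument: you pick the same scale (your $2\rho$ is the paper's $\rho$), the same maximal separated net $Z$, and invoke \cref{asu}, \cref{numest}, and \cref{lebesgue} in the same way, with the multiplicity constant $\ell_3(m)$ cancelling against the Neumann gap exactly as in the paper. Your injection $T$ into $\bigoplus_z F_z$ is the linear-algebra rephrasing of the paper's min-max step (any subspace of dimension $|Z|\rk E+1$ contains a section orthogonal to all the low Neumann eigensections and hence has Rayleigh quotient $\ge\lambda_0$); the only cosmetic omission is that you should note $3\rho=e^{-s_\ve}\rho_2\le e^{-s_\ve}\rho_0/2\le\inj(z)$, which is what lets the sets $U_{2\rho}(z)\subseteq B_{3\rho}(z)$ descend from $H$ to embedded subsets of $M$.
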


\begin{proof}
Let $3\rho/2 = e^{-s_\ve}\rho_2 \le e^{-s_\ve}\rho_0/2$.
Let $Z$ be a maximal $\rho$-separated subset of $\{r\le s_\ve\}$.
Then the balls $B_{\rho}(z)$, with $z\in Z$, cover $\{r\le s_\ve\}$
and $\inj(z)\ge3\rho/2$ at each $z\in Z$, by \cref{inrest}.
Hence the sets $U_\rho(z)$ are well defined as images of the leaves in $H$ above them.
They cover $\{r\le s_\ve\}$ and any $x\in M$ is contained in at most $\ell_3(m)$ of them.

Suppose that $W\subseteq C^\infty(\{r\le s\},E)$ is a subspace with
\begin{align*}
	\dim W = \frac{|\{r\le s_\ve+1\}|}{\omega(m,a,e^{-s_\ve}\rho_2/3)}\rk E +1.
\end{align*}
Then there is a $u\in W$ such that $u$ is perpendicular to the first $\rk E$ Neumann eigensections
on each of the sets $U=U_\rho(z)$.
But then $\Ray_A(u|_U)\ge\ell_3(m)\lambda_0$ on each of these sets.
In conclusion,
\begin{align*}
	\la Au,u\ra_2 \ge \ell_3(m)^{-1} \sum_U \la Au,u\ra_{U,2}
	\ge \sum_U \lambda_0 \|u\|_{U,2}^2 
	\ge \lambda_0 \|u\|_2^2.
\end{align*}
Now the upper bound for the number of Dirichlet eigenvalues below $\lambda_0$
follows from the variational characterization of eigenvalues.
\end{proof}

To finish the proof of Theorems \ref{main} and \ref{maina},
we need to compare the volume of $U_1(C)$ to that of $\{r\le s_\ve+1\}$.
In fact, we have

\begin{lem}\label{volest}
For all $s\ge1$,
\begin{align*}
	|\{r\le s\}| \le |U_s(C)| \le \frac{\omega(m,1,s+2)}{\omega(m,1,1)}|U_1(C)|.
\end{align*}
\end{lem}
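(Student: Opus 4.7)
The first inequality is immediate: since $K_0 \subseteq C$, the distance function $r(x) = d(x, K_0)$ dominates $d(x, C)$, so $\{r \le s\} \subseteq U_s(C)$.

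For the second inequality, the plan is to run a standard packing--covering argument based at points of $C$, combined with the Bishop--Gromov volume comparison in $M$. Concretely, I would choose $Z \subseteq C$ to be a maximal $2$-separated subset (with respect to the distance in $M$). Two observations drive everything:
\begin{itemize}
\item Since $d_M(z,z') \ge 2$ for distinct $z,z'\in Z$, the open balls $B_1(z)$ are pairwise disjoint in $M$, and since $z\in C$ we also have $B_1(z)\subseteq U_1(C)$, so
\[
\sum_{z\in Z} |B_1(z)| \le |U_1(C)|.
\]
\item By maximality, the balls $B_2(z)$ cover $C$. Hence for any $y\in U_s(C)$, picking $c\in C$ with $d(y,c)\le s$ and $z\in Z$ with $d(c,z)\le 2$, we get $d(y,z)\le s+2$, so the balls $B_{s+2}(z)$ with $z\in Z$ cover $U_s(C)$.
\end{itemize}

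With these in hand, I would invoke the sharp (relative) Bishop--Gromov comparison, valid in $M$ since the sectional curvature of $H$ is bounded below by $-1$ and hence so is the Ricci curvature (divided by $m-1$): for every $z\in Z$,
\[
|B_{s+2}(z)| \le \frac{\omega(m,1,s+2)}{\omega(m,1,1)}\,|B_1(z)|.
\]
Summing over $Z$, combining with the covering property for $U_s(C)$ on the left and the disjointness inclusion into $U_1(C)$ on the right, yields the desired inequality
\[
|U_s(C)| \le \sum_{z\in Z} |B_{s+2}(z)| \le \frac{\omega(m,1,s+2)}{\omega(m,1,1)}\sum_{z\in Z}|B_1(z)| \le \frac{\omega(m,1,s+2)}{\omega(m,1,1)}|U_1(C)|.
\]

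There is no real obstacle here; the only point that deserves attention is that Bishop--Gromov is applied directly to the balls in the quotient $M$ rather than in the universal cover $H$. This is legitimate because Bishop--Gromov holds in any complete Riemannian manifold under a Ricci lower bound, and it is crucial for the argument, since lifting to $H$ would replace $|U_1(C)|$ by an infinite volume and ruin the estimate. The requirement $s\ge 1$ is used implicitly to guarantee $1\le s+2$ so that the comparison ratio is applied in the correct regime.
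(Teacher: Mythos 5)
Your proof is correct and is essentially the same argument the paper gives: pack $C$ by a maximal $2$-separated set $Z$, exploit disjointness of the unit balls inside $U_1(C)$ on one side, covering of $U_s(C)$ by the radius-$(s+2)$ balls on the other, and close the loop with Bishop--Gromov applied in $M$ (valid under the Ricci lower bound $\ge -(m-1)$ regardless of injectivity radius, as you correctly emphasize). The only minor slip is your remark about the role of $s\ge 1$: the inequality $1\le s+2$ needs nothing beyond $s\ge -1$, and in fact the proof works verbatim for any $s\ge 0$; the hypothesis $s\ge 1$ is just the range used downstream rather than a constraint the argument requires.
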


\begin{proof}
Choose a maximal $2$-separated subset $Z$ of $C$. It is evident that the balls $B(z,1) \subset U_1(C)$,
$z \in Z$, are pairwise disjoint, and the balls $B(z,s+2)$, $z \in Z$, cover $U_s(C)$ for any $s \geq 1$.
We derive from the Bishop-Gromov comparison theorem that
	\begin{align*}
		| \{r \leq s\} & \leq |U_s(C)| \leq \sum_{z \in Z} |B(z,s+2)|\\
		&\leq \frac{\omega(m,1,s+2)}{\omega(m,1,1)} \sum_{z \in Z} |B(z,1)|
		\leq \frac{\omega(m,1,s+2)}{\omega(m,1,1)}|U_1(C)|,
	\end{align*}
as we wished.
\end{proof}

\begin{proof}[Proof of Theorems \ref{main} and \ref{maina} under \cref{asu}]
Given $\ve>0$, we get from Theorems \ref{thmdir} and \ref{thmnum} and \cref{volest}
that the number $N_A(\lambda_0-\ve)$ of eigenvalues of $A$ below $\lambda_0-\ve$ is at most
\begin{align*}
	\frac{|\{r\le s_\ve+1\}|}{\omega(m,a,e^{-s_\ve}\rho_2/3)}\rk E
	\le \frac{\omega(m,1,s_\ve+3)}{\omega(m,1,1)\omega(m,a,e^{-s_\ve}\rho_2/3)}\rk E|U_1(C)|
\end{align*}
Since $s_\ve=r_\ve+\pi/\sqrt{2\ve}$ and $r_\ve$ only depends on $m$ and $a$,
the assertions of Theorems \ref{main} and \ref{maina} follow.
\end{proof}

\section{On Neumann eigenvalues}
\label{secneu}

We discuss two cases, in which \cref{asu} holds.
The discussion is based on approximating $\Delta$ on geodesic ballls
or approximate geodesic balls in $H$ by the Euclidean Laplacian on Euclidean balls.
To that end,
let $\lambda_N(m)>0$ be the first non-zero Neumann eigenvalue of the Euclidean Laplacian
on functions on the Euclidean unit ball $B_1^m\subseteq\R^m$.
Then the first non-zero Neumann eigenvalue of the Euclidean Laplacian on functions
on the Euclidean ball $B_\rho^m\subseteq\R^m$ of radius $\rho$ is
\begin{align}\label{eucneu}
	\lambda_N(m,\rho) = \lambda_N(m)\rho^{-2}
\end{align}
since the Laplacian behaves like $\text{\rm length}^{-2}$ and $B_\rho^m=\rho B_1^m$.

\subsection{Laplacian on functions}
\label{subfun}
We use Riemannian normal coordinates to view the ball $B_\rho(x)$
as $B_\rho^m$ endowed with a Riemannian metric $g$
and to compare the latter with the Euclidean metric on $B_\rho^m$.
Indicating Euclidean objects by an index $0$, we have
\begin{align*}
	g_0 \le g \le \frac{\sinh(\sigma)^2}{\sigma^2}g_0,
\end{align*}
by the Rauch comparison theorem.
Hence, if $\sinh(\rho)^2/\rho^2\le1+\delta$, then
\begin{align*}
	(1+\delta)^{-1}|df|_0^2 \le |df|^2 \le |df|_0^2
\end{align*}
for the differential of $f\in C^\infty(B_\rho(x))$ and
\begin{align*}
		\vol_0 \le \vol \le (1+\delta)^{m/2}\vol_0
\end{align*}
for the volume elements.
For the Rayleigh quotient of a non-zero $f\in C^\infty(B_\rho(x))$, we obtain
\begin{align}\label{eucray}
	(1+\delta)^{-m/2-1}\Ray_0 f \le \Ray f \le (1+\delta)^{m/2}\Ray_0 f.
\end{align}
Together with \eqref{eucneu}, this shows that \cref{asu} holds for the Laplacian on functions.
More precisely, we get that any two-dimensional subspace of $C^\infty(B_\rho(x))$
contains a non-zero function $f$ such that
\begin{align*}
	\Ray f \ge (1+\delta)^{-(m+2)/2}\lambda_N(m)\rho^{-2}
\end{align*}
so that \cref{asu} holds for $U_\rho(x)=B_\rho(x)$ and $0<\rho\le\rho_1$ as long as
\begin{align*}
	\ell_3(m)\lambda_0\rho^2 \le (1+\delta)^{-(m+2)/2}\lambda_N(m).
\end{align*}
Since the latter condition only depends on $m$, we get that $\rho_2=\rho_2(m,a)$,
where we observe that $\theta_*=0$ in this case anyway.

Clearly, the case where $E$ is flat is analogous and with the same result,
except that the subspace of sections of $E$ over $B_\rho(x)$ should be of dimension $\rk E+1$
instead of two.

\subsection{Connection Laplacian on sections}
\label{subbun}
In \cref{subfun}, the connection does not enter the discussion.
This changes in the case of general $E$, where we need control over the connection.
This is the reason why we consider associated bundles.
Assume therefore that $E_H$ is associated to a $G$-structure $P\to H$
via an orthogonal representation $\theta$ of $G$ on a Euclidean space $E_0$;
compare with \cref{susab}.
Let $\delta>0$ and $U_\sigma(x)$ be the neighborhood of $x$ with harmonic coordinates
as in \cref{harmonic}.

\begin{lem}
In the above situation, there is a constant $C_0=C_0(m)$ such that
the Rayleigh quotients of $u$ on $U_\rho(x)$ with respect to the Euclidean metric
and the Riemannian metric satisfy
\begin{align*}
	\Ray(u)
	\ge (1-\delta)^{m+3} \Ray_0 u - m\delta(1-\delta)C_0^2|\theta|_*^2.
\end{align*}
\end{lem}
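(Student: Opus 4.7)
The plan is to obtain a pointwise lower bound on the integrand $|\nabla(p_\theta u)|^2$ and then integrate, keeping careful track of the discrepancy between the Riemannian and Euclidean volume forms. Fix harmonic coordinates on $U_\rho(x)$, the associated Gram--Schmidt orthonormal frame $(X_i)$ from \cref{subharm}, and a lift $p$ of this frame to $P$ over $U_\rho(x)$. Writing $s = p_\theta u$, formula \eqref{abcd} yields pointwise
\[
|\nabla s|^2 = \sum_i |du(X_i) + \theta_*(\gamma(X_i))u|^2.
\]
I would then apply the elementary inequality $(a+b)^2 \ge (1-\delta)a^2 - \tfrac{1-\delta}{\delta}b^2$ summand by summand in order to separate the Euclidean gradient contribution from the Christoffel contribution.

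For the main term, the $g$-orthonormality of $(X_i)$ identifies $\sum_i |du(X_i)|^2$ with $|du|_{g^*}^2$, the squared norm of $du$ in the dual metric of $g$. Since $g \le (1+\delta)^2 g_0$ by \cref{harmonic}, dualizing gives $|du|_{g^*}^2 \ge (1+\delta)^{-2}|du|_0^2$. For the Christoffel contribution, I would use the operator-norm estimate $|\theta_*(\gamma(X_i))u| \le |\theta_*|\,|\gamma(X_i)|\,|u|$ and note that, in the orthonormal frame $(X_i)$, the Christoffel components satisfy $\gamma_{ij}^k = g(\nabla_{X_i}X_j, X_k)$, so that the bound $|\nabla_{X_i}X_j| \le c_0(m)\delta$ from \eqref{hframe} yields $\sum_i |\gamma(X_i)|^2 \le m^2 c_0(m)^2\delta^2$. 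Setting $C_0(m)^2 = m\,c_0(m)^2$ and incorporating the factor $(1-\delta)/\delta$, the error contribution becomes $m\delta(1-\delta)C_0^2|\theta_*|^2|u|^2$, which is already of the desired shape.

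To finish, I would integrate the resulting pointwise inequality against $dv_g$. Using $dv_g \ge dv_0$ on the main term gives $\int|du|_0^2\,dv_g \ge \int|du|_0^2\,dv_0 = \Ray_0(u)\cdot\int|u|^2\,dv_0$, and the upper bound $dv_g \le (1+\delta)^m dv_0$ (from $\det g \le (1+\delta)^{2m}\det g_0$) gives $\int|u|^2\,dv_0 \ge (1+\delta)^{-m}\int|u|^2\,dv_g$. Collecting factors produces the coefficient $(1-\delta)(1+\delta)^{-(m+2)}$ in front of $\Ray_0(u)\cdot\int|u|^2\,dv_g$. The elementary estimate $(1+\delta)^{-k} \ge (1-\delta)^k$ then bounds this below by $(1-\delta)^{m+3}$, and division by $\int|s|^2\,dv_g = \int|u|^2\,dv_g$ yields the claim.

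The main obstacle is really just bookkeeping: tracking the various $(1\pm\delta)$ factors coming from the metric comparison, the volume comparison, and Young's inequality so that they assemble cleanly into $(1-\delta)^{m+3}$ in front of $\Ray_0(u)$ and into the stated constant $m\delta(1-\delta)C_0^2|\theta_*|^2$ in the error term. There are no subtle analytic points beyond the pointwise bounds already recorded in \cref{subharm}.
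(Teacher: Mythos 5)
Your proof is correct and follows the same overall scheme as the paper's: decompose $\nabla(p_\theta u)$ via \eqref{abcd} into the directional-derivative part and the Christoffel part, apply Young's inequality with the $\delta$-weight, bound the Christoffel contribution using \eqref{hframe}, and then integrate while tracking the metric and volume comparisons from \cref{harmonic}. The one place where you diverge is in estimating the main term $\sum_i |du(X_i)|^2$ from below by $\sum_i |\partial_i u|^2$: the paper does this by rotating the frame $(X_i)$ pointwise to a frame $(Y_i)$ whose vectors are Euclidean-orthogonal at the given point, with Euclidean lengths in $[1-\delta,1]$, then comparing to the coordinate frame; you instead observe that $\sum_i |du(X_i)|^2$ is the Hilbert--Schmidt norm $|du|^2_{g^*}$ (independent of the choice of $g$-orthonormal frame) and that $g\le(1+\delta)^2g_0$ dualizes to $g^*\ge(1+\delta)^{-2}g_0^*$. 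Your route avoids the pointwise choice of orthogonal matrix and actually yields the slightly sharper factor $(1+\delta)^{-2}$ in place of $(1-\delta)^2$, though this is then absorbed into $(1-\delta)^{m+3}$ in the final cleanup, so the conclusion is identical. Both proofs land on a constant $C_0(m)$ that is a dimensional multiple of the $c_0(m)$ from \eqref{hframe}; the exact polynomial in $m$ differs slightly (your bookkeeping gives $C_0^2=m\,c_0(m)^2$), but this is immaterial for the statement.
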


\begin{proof}
Consider the orthonormal frame $(X_i)$ as in \cref{subharm}.
If $A=(a_i^j)$ is an orthogonal matrix
and $(Y_i)$ is the orthonormal frame with $Y_i=a_i^jX_j$, then also
\begin{align*}
	|\nabla_{Y_i}Y_j| \le c_0(m)\delta.
\end{align*}
Let now $y\in U_x$.
Then there is an orthogonal matrix $A$ such that the vectors $(Y_i(y))$
are pairwise orthogonal with respect to the Euclidean metric on $U_x$.
Their Euclidean norm is between $1-\delta$ and $1$.
Let $(F_\mu)$ be the orthonormal frame of $E$ over $U_x$ associated to $(Y_i)$
and write sections of $E$ over $U_x$ as linear combinations $u=u^\mu F_\mu$.
Then we get, at the point $y$,
\begin{align*}
	|\nabla u|^2
	&\ge \sum_{i}
	 |\{Y_i(u)+\theta_*(\gamma(Y_i))u|^2 \\
	 &\ge \sum_{i} \{
	 |Y_i(u)|^2 - 2|Y_i(u)||\theta_*(\gamma(Y_i))u| + |\theta_*(\gamma(Y_i))u|^2\} \\
	 &\ge \sum_{i} \{
	 (1-\delta)|Y_i(u)|^2 - \frac{1-\delta}\delta|\theta_*(\gamma(Y_i))u|^2\} \\
	 &\ge \sum_{i} \{
	 (1-\delta)^3|\partial_iu|^2 - \frac{1-\delta}\delta|\theta_*|^2c_0(m)^2\delta^2|u|^2\} \\
	 &\ge (1-\delta)^3|\nabla_0u|^2 - m\delta(1-\delta)c_0(m)^2|\theta|_*^2|u|^2,
\end{align*}
where $\gamma$ is the tensor field of type $(2,1)$ defined by $\gamma(Y_i)Y_j=\gamma_{ij}^kY_k$.
In conclusion, we obtain
\begin{align*}
	\Ray u
	&= \frac{\int|\nabla u|^2\dx}{\int|u|^2\dx} \\
	&\ge (1-\delta)^3 \frac{\int|\nabla_0 u|^2\dx}{\int|u|^2\dx}
	- m\delta(1-\delta)c_0(m)^2|\theta|_*^2 \\
	&\ge \frac{(1-\delta)^{3}}{(1+\delta)^{m}}\frac{\int|\nabla_0 u|^2\dx_0}{\int|u|^2\dx_0}
	- m\delta(1-\delta)c_0(m)^2|\theta|_*^2 \\
	&\ge (1-\delta)^{m+3} \Ray_0 u - m\delta(1-\delta)c_0(m)^2|\theta|_*^2.
\end{align*}
for the Rayleigh quotients of any non-zero section $u$ of $E$ over $U_x$.
\end{proof}

With $U_\rho(x)=B_\rho^m$,
we obtain that any subspace of $C^\infty(B_\rho^m,E)$ of dimension $\rk E+1$
contains a non-zero section $u$ such that
\begin{align*}
	\Ray u 
	\ge (1-\delta)^{m+3} \lambda_N(m)\rho^{-2} - m\delta(1-\delta)c_0(m)^2|\theta|_*^2
\end{align*}
so that \cref{asu} holds for all $0<\rho\le\rho_1=\rho_1(m,a)$ as long as
\begin{align*}
	(\ell_6(m)\lambda_0+m\delta(1-\delta)c_0(m)^2|\theta|_*^2)\rho^2
	\le (1-\delta)^{m+3} \lambda_N(m).
\end{align*}
Choosing $\delta=1/2$, the latter condition only depends on $m$ and $|\theta_*|$
and yields a $\rho_2=\rho_2(m,a,|\theta_*|)$ as required in \cref{asu}.

\bibliographystyle{amsplain}
\bibliography{GeFiSpec}
\end{document}